\newcommand{\powser}[1]{[\![#1]\!]}
\newcommand{\pdiv}{$p$-divisible }
\newcommand{\G}{\mathbb{G}}
\newcommand{\Q}{\mathbb{Q}}
\newcommand{\Z}{\mathbb{Z}}
\newcommand{\QZ}{\Q_p/\Z_p}  
\newcommand{\Zp}[1]{\Z/p^{#1}}
\newcommand{\al}{\alpha}
\newcommand{\Lk}{\Lambda_k}
\newcommand{\lra}[1]{\overset{#1}{\longrightarrow}}
\newcommand{\Prod}[1]{\underset{#1}{\prod}}
\newcommand{\Coprod}[1]{\underset{#1}{\coprod}}
\newcommand{\Colim}[1]{\underset{#1}{\colim}}
\newcommand{\E}{E_{n}}
\newcommand{\LE}{L_{K(t)}E_n}
\title{Subgroups of \pdiv groups and centralizers in symmetric groups}
\author{
Nathaniel Stapleton \\   
}
\date{\today}
\theoremstyle{definition} 
\DeclareMathOperator{\Tr}{Tr}
\DeclareMathOperator{\im}{im}
\DeclareMathOperator{\Fix}{Fix}
\DeclareMathOperator{\colim}{colim}
\DeclareMathOperator{\Spec}{Spec}
\DeclareMathOperator{\Spf}{Spf}
\DeclareMathOperator{\sub}{sub}
\DeclareMathOperator{\level}{level}
\newtheorem{thm}[subsection]{Theorem}
\newtheorem{example}[subsection]{Example}
\newtheorem{prop}[subsection]{Proposition}
\newtheorem{cor}[subsection]{Corollary}
\newtheorem{lemma}[subsection]{Lemma}
\newtheorem*{mainthm}{Theorem}
\newtheorem{remark}[subsection]{Remark}
\begin{document}
\maketitle

\section{Introduction}
There is a deep correspondence between the Morava $E$-theory of spaces and the algebraic geometry of the formal group associated to $\E$. This is apparent in theorems such as Strickland's work \cite{etheorysym} that relates the $E$-theory of symmetric groups (modulo a transfer ideal) to the scheme classifying finite subgroup schemes of the formal group. It is also seen in the work of Behrens and Rezk \cite{BehrensRezk} that provides an interpretation of the $E$-theory of the Steinberg summands $L(k)_q$ in terms of the modular isogeny complex of the formal group and in the work of Ando \cite{Isogenies} relating isogenies of the formal group to power operations in $\E$. 

The character map of Hopkins, Kuhn, and Ravenel \cite{hkr} provides a tool for understanding the Morava $E$-theory of finite groups. Not only is this map computationally useful, but it suggests a very close relationship between the chromatic filtration and the inertia groupoid functor (they call this $\Fix(-)$). This relationship has been investigated by the author in \cite{tgcm} and \cite{ttcm} in which generalizations of the character map were constructed using the algebraic geometry of \pdiv groups. 

In this paper we compute the effect of the transchromatic generalized characters of \cite{tgcm} on the Morava $E$-theory of symmetric groups. In order to provide an algebro-geometric description of the answer we must develop the relationship between transfers in $\E$ and transfers for the cohomology theory $C_t$ constructed in \cite{tgcm}. This is a generalization to higher heights of Theorem D in \cite{hkr}, which provides a straightforward formula relating transfer maps and the generalized character map. 

The computation indicates a close relationship between the cohomology of centralizers of tuples of commuting elements in symmetric groups and connected components of the scheme that classifies subgroup schemes of a particular \pdiv group. In particular, it provides algebro-geometric descriptions of the cohomology of a large class of finite groups that were without interpretation before. Although the computations in this paper make use of the cohomology theory $C_t$ and the transchromatic generalized character maps of \cite{tgcm}, we believe that they indicate more general phenomena.

To be more precise we need some setup. Fix a prime $p$. Let $\G_{\E}$ be the formal group associated to Morava $\E$. We will view this as the \pdiv group 
\[
\G_{\E}[p] \lra{} \G_{\E}[p^2] \lra{} \ldots 
\]
over $\Spec(\E^0)$. Let $0 \leq t < n$ and let $K(t)$ be Morava $K$-theory of height $t$. In \cite{tgcm}, we construct the universal $\LE^0$-algebra $C_t$ equipped with an isomorphism
\[
C_t \otimes \G_{\E} \cong (C_t \otimes \G_{\LE}) \oplus \QZ^{n-t}.
\]
Let $\G_{C_t} = C_t \otimes \G_{\LE}$. Let $X$ be a finite $G$-CW complex and let
\[
\hom(\Z_{p}^{n-t},G)
\]
be the set of continuous maps from $\Z_{p}^{n-t}$ to $G$. This is a $G$-set by conjugation and we will write 
\[
\hom(\Z_{p}^{n-t},G)/\sim
\]
for the quotient by the $G$-action.

The transchromatic generalized character map of \cite{tgcm} is a map of cohomology theories
\[
\Phi_{G}^{t}:\E^*(EG\times_G X) \lra{} C_{t}^*(EG \times_G \Fix_{n-t}^{G}(X)),
\]
where
\[
\Fix_{n-t}^{G}(X) = \Coprod{\al \in \hom(\Z_{p}^{n-t},G)}X^{\im \al}
\]
and 
\[
C_{t}^{*}(X) := C_t \otimes_{\LE^0} \LE^*(X).
\]
Because of the equivalence
\[
EG\times_G \Fix^{G}_{n-t}(X) \simeq \Coprod{[\al] \in \hom(\Z_{p}^{n-t},G)/\sim} EC_G(\im \al) \times_{C_G(\im \al)} X^{\im \al},
\]
the character map can be viewed as landing in the product of rings
\[
\Phi_{G}^{t}:\E^*(EG\times_G X) \lra{} \Prod{[\al] \in \hom(\Z_{p}^{n-t},G)/\sim}C_{t}^{*}(EC_G(\im \al) \times_{C_G(\im \al)} X^{\im \al}),
\]
where $C_G(\im \al)$ is the centralizer in $G$ of the image of $\al$.
We define
\[
\Phi_{G}^{t}[\al]:\E^*(EG\times_G X) \lra{} C_{t}^*(EC_G(\im \al) \times_{C_G(\im \al)} X^{\im \al})
\]
to be $\Phi_{G}^{t}$ composed with projection onto the factor of $[\al]$.

For $H \subseteq G$ and a cohomology theory $E$, there is a transfer map
\[
E^*(EH\times_H X) \lra{\Tr_E} E^*(EG \times_G X).
\]
\begin{mainthm} 
Let $H \subseteq G$ and $X$ be a finite $G$-space. Let $\Phi_{G}^{t}$ and $\Phi_{H}^{t}$ be the transchromatic generalized character maps associated to the groups $H$ and $G$. Then for $x \in \E^*(EH\times_H X)$ there is an equality
\[
\Phi^{t}_{G}[\al](\Tr_{\E}(x)) = \sum_{[gH] \in (G/H)^{\im \al}/C_G(\im \al)} \Tr_{C_t}(\Phi^{t}_{H}[g^{-1}\al g](x)).
\]
\end{mainthm}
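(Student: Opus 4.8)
The plan is to adapt the proof of Theorem~D of \cite{hkr} to the transchromatic setting, keeping careful track of which transfers occur. We use that $X$ is finite, so that $X^{\im \al}$ is a finite complex and $\E^*(B\Gamma)$ is a finite free $\E^0$-module for the finite abelian groups $\Gamma$ appearing below (hence the relevant Künneth isomorphisms hold), and that $C_t$ is flat over $\LE^0$, as proved in \cite{tgcm}. First I would recall from \cite{tgcm} that $\Phi^t_G[\al]$ is a composite of three maps. Write $C = C_G(\im \al)$ and $W = EC \times_C X^{\im \al}$. The first is the restriction $i_\al^*\colon \E^*(EG \times_G X) \to \E^*(W)$ along the natural map $i_\al\colon W \to EG \times_G X$. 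The second is the restriction $\mu_\al^*$ along the ``multiplication'' map $\mu_\al\colon B(\im \al) \times W \to W$, available since $\im \al$ is central in $C$ and acts trivially on $X^{\im \al}$. The third is the \emph{universal evaluation} $\E^*(B(\im \al) \times W) \to C_t^*(W)$, assembled from the Künneth isomorphism, the coefficient change $\E^* \to \LE^* \to C_t^*$, and evaluation on the $\E^*(B(\im \al))$-factor at the $C_t$-point
\[
(0,\al^\vee) \in \Hom((\im \al)^\vee,\G_{C_t}) \times \Hom((\im \al)^\vee,\QZ^{n-t}) \cong \Hom((\im \al)^\vee, C_t \otimes \G_{\E}),
\]
where $\al^\vee\colon (\im \al)^\vee \hookrightarrow \QZ^{n-t}$ is the Pontryagin dual of the surjection $\al\colon \Z_p^{n-t} \twoheadrightarrow \im \al$ (an injection, since $\al$ is onto). (Equivalently one works with $B\Lambda_k$ for $k \gg 0$ and passes to the colimit.)

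Next I would compute $\Phi^t_G[\al]\circ\Tr_{\E}$ by repeated base change. The transfer $\Tr_{\E}$ is the transfer along the finite covering $EH \times_H X \to EG \times_G X$. Pulling it back along $i_\al$ produces $EC \times_C (G/H \times X^{\im \al})$; decomposing $G/H$ into $C$-orbits, with stabilizers the subgroups $C \cap gHg^{-1}$, base change for transfers gives
\[
i_\al^* \Tr_{\E}(x) = \sum_{[gH] \in (G/H)/C} \Tr_{\E}\!\bigl((c_g)_* \operatorname{res}(x)\bigr),
\]
a sum of $\E$-transfers along the coverings $E(C \cap gHg^{-1}) \times_{C \cap gHg^{-1}} X^{\im \al} \to W$, where $(c_g)_* \operatorname{res}(x)$ is the restriction of $x$ conjugated by $g$; this is the double coset formula in disguise. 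Applying $\mu_\al^*$ and base-changing once more, the pullback of the $[gH]$-th covering along $\mu_\al$ is, as a space, $B(\im \al) \times \bigl(E(C \cap gHg^{-1}) \times_{C \cap gHg^{-1}} X^{\im \al}\bigr)$, but the map it carries down to $B(\im \al) \times W$ is \emph{twisted}: its monodromy couples the $B(\im \al)$-factor to the coset space $C/(C \cap gHg^{-1})$ via the left translation action of $\im \al$, and this twist is trivial exactly when $g^{-1}\im \al g \subseteq H$.

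The crux --- and, I expect, the main obstacle --- is to show that, after applying the universal evaluation, the summands with $g^{-1}\im \al g \not\subseteq H$ vanish. The intuition: $\Phi^t_G[\al]$ evaluates at the faithful character $\al^\vee$ of $\im \al$, while (since $\im \al$ is central in $C$) the left translation action of $\im \al$ on $C/(C \cap gHg^{-1})$ appearing in the previous step is fixed-point-free precisely when $g^{-1}\im \al g \not\subseteq H$; thus the offending summands involve $\E$-transfers along coverings $BB_0 \to B(\im \al)$ with $B_0 \subsetneq \im \al$ a proper subgroup, and such transfers are annihilated by evaluation at $\al^\vee$. Making this rigorous requires the transchromatic analogue of the vanishing lemma behind Theorem~D of \cite{hkr}: for every proper $B_0 \subsetneq \im \al$ the composite
\[
\E^*(BB_0) \lra{\Tr_{\E}} \E^*(B(\im \al)) \lra{\ \text{eval at }\al^\vee\ } C_t
\]
is zero. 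Its proof should parallel the height-$n$-to-$0$ case of \cite{hkr}: the image of $\Tr_{\E}$ is the transfer ideal of $BB_0 \to B(\im \al)$, and one must check that the Strickland-type description of this ideal survives base change to $C_t$ and that the point $(0,\al^\vee)$ --- degenerate in the formal summand $\G_{C_t}$ --- lies in its vanishing locus. What makes this go through is that $\al^\vee$ is \emph{injective}, placing $(\im \al)^\vee$ faithfully in the constant summand $\QZ^{n-t}$; it is this faithfulness, not genericity of the evaluation point, that forces the vanishing.

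It remains to identify the surviving summands, those with $g^{-1}\im \al g \subseteq H$. For these the twist above is trivial, so the covering pulled back along $\mu_\al$ is the identity on the $B(\im \al)$-factor and a covering on the other; since the universal evaluation acts only on the $B(\im \al)$-factor and merely changes coefficients on the $W$-factor, it commutes past the transfer, turning $\Tr_{\E}$ into $\Tr_{C_t}$. Put $\beta = g^{-1}\al g$, so $\im \beta \subseteq H$ and $g^{-1}Cg \cap H = C_H(\im \beta)$. Conjugation by $g$ then identifies $(C \cap gHg^{-1} \supseteq \im \al,\ \al)$ with $(C_H(\im \beta) \supseteq \im \beta,\ \beta)$ and carries the evaluation point $\al^\vee$ to $\beta^\vee$, so the $[gH]$-summand becomes exactly $\Tr_{C_t}\!\bigl(\Phi^t_H[\beta](x)\bigr)$, the $C_t$-transfer along the covering $EC_H(\im \beta) \times_{C_H(\im \beta)} X^{\im \beta} \to EC_G(\im \al) \times_{C_G(\im \al)} X^{\im \al}$ induced by $c_g$. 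Finally $(G/H)^{\im \al}$ is a union of $C$-orbits on $G/H$, and $gH \in (G/H)^{\im \al}$ if and only if $g^{-1}\im \al g \subseteq H$; hence the surviving summands are indexed by $[gH] \in (G/H)^{\im \al}/C_G(\im \al)$, which is the asserted formula.
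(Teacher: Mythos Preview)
Your proposal is correct and takes essentially the same approach as the paper: both adapt the proof of \cite{hkr}, Theorem~D, by pulling back the transfer covering along the topological part of the character map, separating the summands on which $\im\al$ acts nontrivially on $G/H$, killing those via the transchromatic vanishing lemma (which the paper records as the direct analogue of \cite{hkr}, Corollary~6.13), and identifying the surviving summands via the bijection $(G/H)^{\im\al}/C_G(\im\al)\cong i_*^{-1}([\al])$. The only difference is organizational---you factor the topological map as $i_\al\circ\mu_\al$ and pull back in two stages (decomposing $G/H$ into $C$-orbits first), whereas the paper performs the pullback along $B\Lambda_k\times W\to EG\times_G X$ in a single step and then splits into the $\im\al$-fixed and non-fixed parts globally---but the content is the same.
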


When $t = 0$ this recovers Theorem D of \cite{hkr}. When $X = *$ the transfer on the right is along the inclusion
\[
gC_H(g^{-1} \im \al g)g^{-1} \subset C_G(\im \al).
\]

Let $\Sigma_{p^{k-1}}^{\times p} \subseteq \Sigma_{p^k}$ be the obvious subgroup. Let $I_{tr} \subseteq \E^0(B\Sigma_{p^k})$ be the ideal generated by the image of the transfer along $\Sigma_{p^{k-1}}^{\times p} \subseteq \Sigma_{p^k}$.

In \cite{etheorysym}, Strickland proves that
\[
\Spec(\E^0(B\Sigma_{p^k})/I_{tr}) \cong \sub_{k}(\G_{\E}),
\]
where $\sub_{k}(\G_{\E})$ is the scheme that classifies subgroup schemes of order $p^k$ in $\G_{\E}$.

The transchromatic generalized character map and the theorem above provide an isomorphism
\[
C_t \otimes_{\E^0} \E^0(B\Sigma_{p^k})/I_{tr} \cong \Prod{[\al] \in \hom(\Z_{p}^{n-t}, \Sigma_{p^k})/\sim} C_{t}^{0}(BC(\im \al))/I_{tr}^{[\al]},
\]
in which the ideals $I_{tr}^{[\al]}$ in the codomain are constructed using the theorem above. Each of the factors in the codomain are connected.

Let $m$ be the smallest integer such that $\al$ factors
\[
\xymatrix{& \Sigma_{p^{m}}^{\times p^{k-m}} \ar[d] \\ \Z_{p}^{h} \ar[r]^{\al} \ar@{-->}[ur] & \Sigma_{p^{k}}}
\]
up to conjugacy. Now let 
\[
\Delta: \Sigma_{p^{m}} \lra{} \Sigma_{p^{m}}^{\times p^{k-m}}
\]
be the diagonal map. The ideal $I_{tr}^{[\al]} \neq C_{t}^{0}(BC(\im \al))$ if and only if $\al$ further factors through $\Delta$ (up to conjugacy).

There are also isomorphisms
\[
C_t \otimes \sub_{k}(\G_{\E}) \cong \sub_{k}(C_t \otimes \G_{\E}) \cong \sub_{k}(\G_{C_t} \oplus \QZ^{n-t}).
\]
The main theorem of the paper describes the relationship between the composite of these isomorphisms and the character map:
\begin{mainthm}
The isomorphism fits into a commutative triangle
\[
\xymatrix{\Coprod{[\al] \in \hom(\Z_{p}^{n-t}, \Sigma_{p^k})/\sim}\Spec(C_{t}^{0}(BC(\im \al))/I_{tr}^{[\al]}) \ar[r]^-{\cong} \ar[d] & \sub_{k}(\G_{C_t} \oplus \QZ^{n-t}) \ar[dl] \\ \sub_{\leq k}(\QZ^{n-t}), &}
\]
where the left map takes the component corresponding to $[\al]$ to the image of the Pontryagin dual
\[
\al^*: (\im \al)^* \lra{} \QZ^{n-t}
\]
and the right map is induced by the projection
\[
\G_{C_t} \oplus \QZ^{n-t} \lra{} \QZ^{n-t}.
\]
\end{mainthm}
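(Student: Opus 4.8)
The plan is to verify commutativity of the triangle by tracking a point of the left-hand scheme through both composites and checking they agree in $\sub_{\leq k}(\QZ^{n-t})$. Since all the schemes in sight are affine and the maps are maps of schemes over $\LE^0$ (or rather over $C_t$), it suffices to work with a point valued in an arbitrary $C_t$-algebra $R$, or even, after base change along $C_t \to R$, to reduce to the universal case. A point of $\Spec(C_{t}^{0}(BC(\im\al))/I_{tr}^{[\al]})$ in the component indexed by $[\al]$ corresponds, via Strickland's theorem together with the character map isomorphism from the discussion preceding the statement, to a subgroup scheme $H \subseteq \G_{C_t} \oplus \QZ^{n-t}$ of order $p^k$. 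The top isomorphism sends this point to exactly that $H$, and the right diagonal map then sends it to the image of $H$ under the projection $\pi: \G_{C_t} \oplus \QZ^{n-t} \to \QZ^{n-t}$, which is a finite subgroup scheme of $\QZ^{n-t}$ of order $\leq p^k$. So the content is to show that this projected image $\pi(H)$ equals the image of the Pontryagin dual $\al^*: (\im\al)^* \to \QZ^{n-t}$, which is how the left-hand diagonal map is defined.

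First I would make explicit the identification of the $[\al]$-component under the character map. The factor $C_{t}^{0}(BC(\im\al))$ carries the subgroup scheme of $\G_{C_t} \oplus \QZ^{n-t}$ arising as follows: the homomorphism $\al: \Z_p^{n-t} \to \Sigma_{p^k}$ determines, on the level of the $E$-cohomology of $B\Sigma_{p^k}$ and after applying $C_t \otimes_{\E^0}(-)$, a specific way in which the splitting $C_t \otimes \G_{\E} \cong \G_{C_t} \oplus \QZ^{n-t}$ interacts with the tautological subgroup of order $p^k$. Concretely, the image of $\al^*: (\im\al)^* \to \QZ^{n-t}$ is the ``étale part'' of the subgroup, and the key structural fact — which I would extract from the level-structure/Strickland picture together with the behavior of $\Phi_G^t$ on Strickland's generators — is that the tautological order-$p^k$ subgroup $H_\al$ over the $[\al]$-factor sits in a short exact sequence relating its connected part (a subgroup of $\G_{C_t}$) and its étale quotient, and that the étale quotient is canonically $\im(\al^*)$. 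The minimality of $m$ in the factorization through $\Sigma_{p^m}^{\times p^{k-m}}$, and the further factorization through the diagonal $\Delta$ exactly when $I_{tr}^{[\al]}$ is proper, is what controls the order of $\im(\al^*)$: this is $p^{k-m}$ when $\al$ factors through $\Delta$ as described, and one checks $|\pi(H_\al)| = p^{k-m}$ on the nose.

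The second step is then the identification $\pi(H_\al) = \im(\al^*)$. I would argue this by naturality: the inclusion $(\im\al)^* \hookrightarrow \QZ^{n-t}$ is the Pontryagin dual of the surjection $\Z_p^{n-t} \twoheadrightarrow \im\al$, and on the $E$-theory side the subgroup $H_\al$ is pulled back along the map $B(\im\al) \to B\Sigma_{p^k}$ from the tautological subgroup; chasing this through the character map $\Phi^t$, whose defining property is to split off a $\QZ^{n-t}$ from $\G_\E$ compatibly with the $\hom(\Z_p^{n-t},-)$-indexing, identifies the $\QZ^{n-t}$-component of $C_t \otimes H_\al$ with the image of the composite $(\im\al)^* \to \QZ^{n-t}$. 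Because projection $\pi$ is precisely the map inducing the right diagonal, this gives $\pi(H_\al) = \im(\al^*)$ as subgroup schemes of $\QZ^{n-t}$, and the triangle commutes.

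The main obstacle I anticipate is the first step: pinning down precisely which order-$p^k$ subgroup scheme of $\G_{C_t} \oplus \QZ^{n-t}$ corresponds to the $[\al]$-component, i.e.\ unwinding the composite isomorphism $C_t \otimes \sub_k(\G_\E) \cong \sub_k(\G_{C_t} \oplus \QZ^{n-t})$ together with the character-map isomorphism $C_t \otimes_{\E^0}\E^0(B\Sigma_{p^k})/I_{tr} \cong \prod_{[\al]} C_t^0(BC(\im\al))/I_{tr}^{[\al]}$ and showing that, factor by factor, the tautological subgroup has étale quotient canonically equal to $\im(\al^*)$. This requires combining Strickland's description of $\E^0(B\Sigma_{p^k})/I_{tr}$ as classifying subgroups, the transfer formula of the first main theorem (to identify the ideals $I_{tr}^{[\al]}$ and hence the connected components), and the explicit transchromatic character computation; the bookkeeping of how $\al$ distributes its image among the $p^k$ points, encoded by the factorization through $\Sigma_{p^m}^{\times p^{k-m}}$ and the diagonal, is where the real work lies. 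Once that dictionary is in place, the commutativity of the triangle is a formal consequence of the functoriality of all the constructions involved.
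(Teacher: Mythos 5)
Your reduction of the problem is the right one: everything comes down to showing that the tautological order-$p^k$ subgroup $H_\al$ sitting over the $[\al]$-component projects onto $\im(\al^*)\subseteq \QZ^{n-t}$. But that identification is the entire content of the theorem, and your proposal does not actually establish it --- you acknowledge as much when you say this is ``where the real work lies.'' The specific mechanism you offer, namely pulling back the tautological subgroup along $B(\im\al)\to B\Sigma_{p^k}$ and invoking naturality of $\Phi^t$, does not close the gap: $\im\al$ is in general neither transitive nor of order $p^k$, so the restriction $\E^0(B\Sigma_{p^k})/I_{tr}\to \E^0(B\im\al)$ does not carry the universal subgroup to anything from which one can read off its \'etale quotient, and the decomposition of $C_t\otimes_{\E^0}\E^0(B\Sigma_{p^k})$ into $[\al]$-components is defined via the maps $B\Lk\times BC(\im\al)\to B\Sigma_{p^k}$, not via $B(\im\al)\to B\Sigma_{p^k}$. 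So the ``short exact sequence with \'etale quotient canonically $\im(\al^*)$'' is asserted rather than proved.

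The paper gets around this by never working with the tautological subgroup over the $\Sigma_{p^k}$-components directly. Instead it uses the covering $\Coprod{A\in\bA}\level(A^*,\G_{\E})\to\sub_k(\G_{\E})$ by level-structure schemes attached to the \emph{transitive abelian} subgroups $A\subseteq\Sigma_{p^k}$ (Strickland, proof of Proposition 9.1 of \cite{etheorysym}), whose global sections inject. For abelian $A$ everything is explicit: $\Spec(\E^0(BA))\cong\hom(A^*,\G_{\E})$, the character map decomposes $\Spec(C_t^0(BA))$ into components indexed by $\hom(\Z_p^{n-t},A)$, and by construction the $\QZ^{n-t}$-part of the corresponding map $A^*\to\G_{C_t}\oplus\QZ^{n-t}$ is literally the Pontryagin dual. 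One checks the commutativity of the triangle over the level-structure schemes and then descends it to $\sub_k(\G_{C_t}\oplus\QZ^{n-t})$ using injectivity on global sections (preserved by the flat base change to $C_t$). If you want to complete your argument, you should either adopt this level-structure covering or supply an independent, explicit computation of the \'etale quotient of $H_\al$ over each $[\al]$-component; as written, the key step is missing.
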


This implies the following: Fix a map $\al: \Z_{p}^{n-t} \lra{} \Sigma_{p^k}$ that factors through $\Delta$ (up to conjugacy) and let $L \subseteq \QZ^{n-t}$ be the image of the Pontryagin dual $\al^*: \im \al \lra{} \QZ^{n-t}$. Let $f:\sub_{k}(\G_{C_t} \oplus \QZ^{n-t}) \lra{} \sub_{\leq k}(\QZ^{n-t})$ and let $f^{-1}(L)$ be the pullback 
\[
\xymatrix{f^{-1}(L) \ar[r] \ar[d] & \sub_{k}(\G_{C_t} \oplus \QZ^{n-t}) \ar[d]^{f} \\
			\ast \ar[r]^(.3){L} & \sub_{\leq k}(\QZ^{n-t}).}
\]
Then
\[
\Spec(C_{t}^{0}(BC(\im \al))/I_{tr}^{[\al]}) \cong f^{-1}(L).
\]
That is, the pullback gives an algebro-geometric interpretation of 
\[
\Spec(C_{t}^{0}(BC(\im \al))/I_{tr}^{[\al]}):
\]
it consists of the subgroups of order $p^k$ in $\G_{C_t} \oplus \QZ^{n-t}$ that project onto $L$ in $\QZ^{n-t}$.

\paragraph{Acknowledgments} It is a pleasure to thank Mark Behrens for suggesting that the transchromatic generalized character maps be applied to the symmetric groups and for many helpful conversations. I would like to thank Charles Rezk for pointing out Theorem D in \cite{hkr} and for conversations regarding subgroups of \pdiv groups. I would also like to thank Matt Ando, Tobi Barthel, Dustin Clausen, David Gepner, Mike Hopkins, Zhen Huan, Jacob Lurie, Haynes Miller, Kyle Ormsby, and Vesna Stojanoska for helpful conversations. 

\section{Transfer Maps}
We recall the formula provided in Theorem D of \cite{hkr} and construct a generalization. Following Adams advice at the end of Chapter 4 of \cite{infiniteloop}, we avoid mentioning the words ``double cosets''.

\subsection{Recollections}
Fix a prime $p$ and an integer $0\leq t<n$. Let $\E$ be Morava $E$-theory of height $n$ with associated formal group $\G_{\E}$. Let $\LE$ be the localization of $\E$ with respect to height $t$ Morava $K$-theory $K(t)$. Recall from Section 3 of \cite{tgcm} that 
\[
C_t' = \Colim{k} \text{ } \LE^0\otimes_{\E^0}\E^0(B(\Zp{k})^{n-t})
\]
and 
\[
C_t = S^{-1}C_t',
\]
where $S$ is essentially the image of $\QZ^{n-t}$ inside of $\G_{\E}(C_t')$. By Corollary 2.18 of \cite{tgcm} the ring $C_t$ is the universal $\LE^0$-algebra equipped with an isomorphism
\[
C_t \otimes (\LE^0 \otimes \G_{\E}) \cong (C_t \otimes \G_{\LE}) \oplus \QZ^{n-t}.
\]

Parting from the notation in \cite{tgcm}, we will often write $\G_{C_t}$ for $C_{t}\otimes \G_{\LE}$. Recall that for a finite $G$-space (a space equivalent to a finite $G$-CW complex) $X$,
\[
\Fix_{n-t}^{G}(X) = \Coprod{\al \in \hom(\Z_{p}^{n-t},G)}X^{\im \al}.
\]
The main construction of \cite{tgcm} is the transchromatic generalized character map
\[
\Phi_{G}^{t}:\E^*(EG\times_G X) \lra{} C_{t}\otimes_{\LE^0}\LE^*(EG \times_G \Fix_{n-t}^{G}(X)).
\]
It recovers the generalized character map of \cite{hkr} when $t=0$. We will denote the codomain as 
\[
C_{t}^{*}(EG \times_G \Fix_{n-t}^{G}(X)).
\]
Recall that the character map $\Phi_{G}^{t}$ is the composite of two maps. The first is induced by a map of topological spaces
\[
B\Lk \times EG \times_G \Fix^{G}_{n-t}(X) \lra{T} EG\times_G X,
\]
which is induced by a map of topological groupoids
\[
\xymatrix{\Lk \times G \times \Fix^{G}_{n-t}(X) \ar[d] \ar@<1ex>[d] \ar[r]^-{T_{\text{mor}}} & G \times X \ar[d] \ar@<1ex>[d] \\ \Fix^{G}_{n-t}(X) \ar[r]^-{T_{\text{ob}}} & X.}
\]
The map $T_{\text{ob}}$ is just the inclusion on each component. The map $T_{\text{mor}}$ is defined by
\[
(l,g,x \in X^{\im \al}) \mapsto (g \al(l), x).
\]
More details can be found in Section 3.1 of \cite{tgcm} or Section 4.1 of \cite{ttcm}.

In \cite{hkr}, Hopkins, Kuhn, and Ravenel provide a formula for the relationship between transfers for Morava $\E$ and their character map (the case $t=0$ above).
\begin{thm} (\cite{hkr}, Theorem D)
Let $X$ be a finite $G$-space, $H \subseteq G$, $x \in \E^0(EH \times_H X)$, $\al:\Z_{p}^{n-t} \lra{} G$, and 
\[
\Tr_{\E}: \E^*(EH\times_H X) \lra{} \E^*(EG \times_G X)
\]
the transfer map in $\E$. Let $\Phi^{0}_{G}$ be the Hopkins-Kuhn-Ravenel character map of \cite{hkr} and $\Phi^{0}_{G}(\al)$ the character map followed by the projection onto the $\al$-factor. Then
\[
\Phi^{0}_{G}(\al)(\Tr(x)) = \sum_{gH \in (G/H)^{\im \al}} \Phi^{0}_{H}(g^{-1}\al g)(x).
\]
\end{thm}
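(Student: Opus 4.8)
The plan is to deduce the formula from the naturality of the $\E$-theory transfer under pullback — the double-coset-free property Adams advocates — together with the height-zero localization that defines $C_0$. First I would isolate the two inputs. Since $t=0$, the value $\Phi^{0}_{G}(\al)(f)$ is computed by restricting $f$ along $B\al\colon B\Z_{p}^{n}\lra{}BG$, which factors through $B\Lk$ once $k$ is large, and then applying the base change to $C_0$ that records the tautological point of $\QZ^{n}\cong \G_{\E}[p^{\infty}]\otimes C_0$; for general $X$ one restricts along the $\al$-component of $T_G$, which additionally carries the factor $X^{\im \al}$. The transfer $\Tr_{\E}$ is the transfer of the finite covering $EG\times_H X\lra{}EG\times_G X$ with fiber $G/H$. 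The tool is that for a homotopy pullback square whose right-hand arrow is a finite covering, the transfer commutes with the horizontal restriction maps.

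Next I would form the homotopy pullback of this covering along the factored map $B\Lk\lra{}BG$. This yields the covering of $B\Lk$ classified by the action of $\Lk$ on $G/H$ through $\al$, whose total space is the disjoint union of its orbits, the orbit through $gH$ having stabilizer $S_{gH}\subseteq \Lk$. By naturality of the transfer,
\[
\Phi^{0}_{G}(\al)(\Tr_{\E}(x))=\sum_{\text{orbits }[gH]}\bigl(\text{base change to }C_0\text{ of }\Tr^{\Lk}_{S_{gH}}\bigl((B\beta_{gH})^{*}x\bigr)\bigr),
\]
where $\beta_{gH}\colon S_{gH}\lra{}H$ is the conjugated homomorphism $l\mapsto g^{-1}\al(l)g$. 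On a singleton orbit the stabilizer is all of $\Lk$, the transfer is the identity, and the contribution is exactly $\Phi^{0}_{H}(g^{-1}\al g)(x)$; the case of general $X$ is identical once one notes $X^{\im(g^{-1}\al g)}=g^{-1}\cdot X^{\im \al}$.

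Finally I would sort the orbits after base change to $C_0$. The singletons are precisely the cosets $gH\in(G/H)^{\im \al}$ fixed by $\im \al$. On every other orbit $\im \al$ acts nontrivially, so $S_{gH}$ is a proper subgroup of $\Lk$; the defining localization of $C_0$ (at $t=0$ one inverts the image of $\QZ^{n}$, equivalently the relevant Euler classes) annihilates transfers from proper subgroups, so these orbits contribute zero. Summing the surviving terms gives the asserted sum over $(G/H)^{\im \al}$. The hard part is this last step: the vanishing of the proper-subgroup transfers after base change is the genuine content, and it is exactly the height-zero localization of \cite{hkr} that forces the sum to range only over the $\im \al$-fixed cosets rather than over all orbits.
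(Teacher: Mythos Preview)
Your proposal is correct and follows essentially the same approach as the paper (which in turn follows \cite{hkr}): pull back the finite covering $EG\times_H X\to EG\times_G X$ along the topological part of the character map, decompose the resulting $\Lk$-set $G/H$ into orbits, and then use the key fact that transfers from proper subgroups of $\Lk$ vanish after base change to $C_0$ (this is the paper's analogue of \cite{hkr}, Lemma 6.12). The paper phrases the decomposition as splitting $G/H\times X^{\im\al}$ into the $\im\al$-fixed part and its complement and invokes the vanishing as Corollary~\ref{emptycor}, but this is exactly your singleton/non-singleton orbit dichotomy.
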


We will extend their proof methods in order to generalize their result to $t > 0$. 

\subsection{Two pullback squares} \label{twopb}
Fix a finite group $G$, a subgroup $H$, and an integer $k$ such that every continuous map $\Z_{p}^{n-t} \lra{} G$ factors through $\Lk = (\Zp{k})^{n-t}$. 

\begin{lemma} \label{prop1}
For $\al:\Lk \lra{} G$, let $gH \in (G/H)^{\im \al} \subseteq \Fix^{G}_{n-t}(G/H)$. Then $\im g^{-1} \al g \subseteq H$.
\end{lemma}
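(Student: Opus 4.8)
The plan is to unwind the definitions so that both sides become statements about the action of $\im\al$ on $G/H$. First I would observe that $gH \in (G/H)^{\im\al}$ means precisely that $\al(l)\cdot gH = gH$ for every $l \in \Lk$, i.e. that $g^{-1}\al(l)g \in H$ for every $l \in \Lk$. Since $\im g^{-1}\al g = \{ g^{-1}\al(l) g : l \in \Lk\}$, this is exactly the assertion $\im g^{-1}\al g \subseteq H$, so the lemma is essentially a tautology once the fixed-point condition is spelled out.

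The only step requiring any care is the translation between the $G$-action on $G/H$ used in the definition of $\Fix^{G}_{n-t}(G/H)$ and the condition $\al(l)gH = gH$. I would make explicit that $G$ acts on $G/H$ by left multiplication, that $(G/H)^{\im\al}$ is the subset fixed by the subgroup $\im\al \subseteq G$, and hence $gH$ lies in it iff it is fixed by each generator, equivalently by each element, of $\im\al$. Rewriting $\al(l)gH = gH$ as $g^{-1}\al(l)g \in H$ is then just the standard description of the stabilizer of a coset.

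I expect there to be no real obstacle here; the lemma is a bookkeeping statement whose purpose is to make the transfer $\Tr_{C_t}(\Phi^{t}_{H}[g^{-1}\al g](x))$ in the main theorem well-defined (it requires $g^{-1}\al g$ to land in $H$ so that $\Phi^{t}_{H}[g^{-1}\al g]$ makes sense). The proof is therefore a single short paragraph: spell out the fixed-point condition, apply the coset-stabilizer identity, and conclude.
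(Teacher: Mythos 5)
Your proof is correct and is essentially identical to the paper's: both unwind the fixed-point condition $\al(l)gH = gH$ to $g^{-1}\al(l)g H = H$ and conclude $g^{-1}\al(l)g \in H$ from the stabilizer description of a coset. Nothing is missing.
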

\begin{proof}
Let $a \in \im \al$. Then $agH = gH$ implies that $g^{-1}agH = H$. Now $g^{-1}ag$ fixes $H$ implies that $g^{-1}ag \in H$.
\end{proof}

For $\al:\Lk \lra{} G$, let $C(\im \al)$ be the centralizer of the image of $\al$. When multiple groups are in use we may write $C_H(\im \al)$ to mean the centralizer of $\im \al$ inside of $H$. 
Let $X$ be a finite $G$-space. Recall that $X^{\im \al}$ is a $C_G(\im \al)$-space. There is an equivalence of spaces 
\[
EH\times_H X \simeq EG \times_G (G\times_H X),
\]
where $G\times_H X$ is the obvious coequalizer.
Recall that there is a homeomorphism of $G$-spaces
\[
G \times_H X \cong G/H \times X
\]
induced by the map
\[
(g,x) \mapsto (gH, gx).
\]
Fix a map $\al:\Lk \lra{} G$. The above homeomorphism induces a homeomorphism of $C(\im \al)$-spaces
\[
(G\times_H X)^{\im \al} \cong (G/H \times X)^{\im \al} \cong (G/H)^{\im \al}\times X^{\im \al}. 
\]

There is also an equivalence of spaces
\begin{equation*} \label{eqn}
w:\Coprod{[\al] \in \hom(\Z_{p}^{n-t},G)/\sim} EC(\im \al) \times_{C(\im \al)} X^{\im \al} \simeq EG\times_G \Fix^{G}_{n-t}(X),
\end{equation*}
where the disjoint union is taken over conjugacy classes of maps. The description on the left is given by fixing representatives of conjugacy classes. This equivalence follows from Proposition 4.13 in \cite{ttcm}. Given a representative $\al \in [\al]$, the map is induced by the inclusion $C(\im \al) \subseteq G$.

\begin{prop} \label{prop2}
There is a pullback of spaces
\[
\xymatrix{B\Lk \times EH \times_H \Fix_{n-t}^{H}(X) \ar[d] \ar[r]^-{T} & EH\times_H X \ar[d] \\
			 B\Lk \times EG \times_G \Fix_{n-t}^{G}(G/H\times X) \ar[r]^-{T} & EG \times_G (G/H \times X). }
\]
\end{prop}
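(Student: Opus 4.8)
The plan is to reduce the statement to the elementary fact that a homotopy-commutative square in which both vertical maps are weak equivalences is automatically a homotopy pullback; so it suffices to check that the square commutes and that both columns are equivalences. The right column is easy: combining the equivalence $EH \times_H X \simeq EG \times_G (G \times_H X)$ recalled above with the $G$-homeomorphism $G \times_H X \cong G/H \times X$ exhibits the right vertical map as a composite of equivalences.

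The left column is the heart of the matter. I would produce a natural isomorphism of $G$-spaces
\[
\Fix^{G}_{n-t}(G \times_H X) \cong G \times_H \Fix^{H}_{n-t}(X),
\]
defined on the $\beta$-summand of the right-hand side, for $\beta : \Z_{p}^{n-t} \to H$, by $[g, y \in X^{\im \beta}] \mapsto [g,y] \in (G \times_H X)^{\im(g \beta g^{-1})}$. The only nontrivial points are that this lands in the indicated component and is a bijection: surjectivity is precisely the $X$-parametrised form of Lemma~\ref{prop1}, since if $[g,y]$ is fixed by $\im\al$ then $g^{-1}\al g$ takes values in $H$, so $\al$ is conjugate to $\beta := g^{-1}\al g$ and $y \in X^{\im\beta}$; well-definedness and injectivity are the standard manipulations of the two defining relations of $G \times_H(-)$ together with conjugation of the labelling homomorphisms. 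Applying $EG \times_G (-)$ and invoking $EH \times_H Y \simeq EG \times_G (G \times_H Y)$ once more, with $Y = \Fix^{H}_{n-t}(X)$, identifies $EG \times_G \Fix^{G}_{n-t}(G/H \times X) \simeq EH \times_H \Fix^{H}_{n-t}(X)$; tracing through, this is the map induced by the inclusion $H \hookrightarrow G$ that occurs in the square. (Alternatively one can decompose both Borel constructions via the equivalence $w$ into disjoint unions over conjugacy classes and match the pieces using Lemma~\ref{prop1} and an orbit count for the $C(\im\al)$-action on $(G/H)^{\im\al}$; but the $G$-homeomorphism above is cleaner.)

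For commutativity, recall that each $T$ is the classifying space of the map of topological groupoids from Section~2.1, assembled from $T_{\text{ob}}$ (inclusion of fixed-point spaces) and $T_{\text{mor}}(l,g,x) = (g\al(l),x)$. I would build a commutative diagram of topological groupoids whose two rows are the $H$- and $G$-versions of this data and whose downward maps are the groupoid equivalence underlying $EH \times_H X \simeq EG \times_G (G \times_H X)$ together with its $\Fix$-analogue from the previous paragraph; naturality of $T_{\text{mor}}$ and $T_{\text{ob}}$ under conjugation makes this commute on the nose. Passing to classifying spaces and multiplying by $B\Lk$ yields the homotopy-commutative square of the statement, and since both verticals are equivalences it is a homotopy pullback.

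The step I expect to be the main obstacle is the left column: checking that $[g,y] \mapsto [g,y]$ is a well-defined $G$-equivariant homeomorphism onto $\Fix^{G}_{n-t}(G \times_H X)$ requires carefully tracking how the conjugation of labelling homomorphisms interacts with the defining relations of $G \times_H(-)$, and one must moreover verify that the induced identification of Borel constructions really is the specific map ``$H \hookrightarrow G$'' named in the statement --- this compatibility is what allows the square to be pasted with the next one to extract the transfer formula, rather than merely an abstract equivalence.
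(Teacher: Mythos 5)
Your proposal is correct, but it takes a genuinely different route from the paper's. The paper works entirely at the level of topological groupoids and checks directly that the square of groupoids is a \emph{strict} pullback on both objects and morphisms --- the key observation being that a morphism $(g\al(l),(eH,x))$ with $(eH,x)\in(G/H\times X)^{\im\al}$ lies in the image of $H\times X$ only if $\im\al\subseteq H$ and $g\in H$ --- and then invokes the fact that realization commutes with pullbacks; this is a short verification that never needs to know that either vertical map is an equivalence. You instead prove that both verticals are weak equivalences, the left one via the induction isomorphism $\Fix^{G}_{n-t}(G\times_H X)\cong G\times_H\Fix^{H}_{n-t}(X)$, and conclude that the homotopy-commutative square is a homotopy pullback by two-out-of-three. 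Your induction isomorphism is correct (the well-definedness, injectivity and surjectivity checks you describe are exactly Lemma \ref{prop1} together with the componentwise analysis the paper only carries out later, in Proposition \ref{equivprop} and the discussion around it), so in effect you are front-loading a result the paper defers to Subsection 2.3; this makes your proof of Proposition \ref{prop2} longer, but it renders Corollary \ref{cor} and the subsequent claim that the left map is an equivalence immediate, and your global $G$-equivariant formulation is a clean packaging of that later material. The one difference in the conclusions is that you obtain a homotopy pullback where the paper produces an honest pullback of spaces; for the way this square is used in the proof of Theorem \ref{mainthm}, where both verticals become isomorphisms on $\E^*$ and only commutativity is exploited, your weaker statement suffices, but the strict pullback is what the paper literally asserts and what it gets for free from the groupoid model.
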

\begin{proof}
Begin by viewing the spaces as the realizations of topological groupoids. The right hand map is induced by $x \mapsto (eH,x)$. The diagram of topological groupoids is a pullback. It is trivial to see this on the level of objects. The bottom arrow on morphisms is
\[
(l,g,(gH,x) \in (G/H\times X)^{\im \al}) \mapsto (g\al(l), (gH,x)).
\]
The image of this is only hit by $(h,x)$ if $\im \al \subseteq H$ and $g \in H$ in which case it is hit by $(g\al(l),x)$. This completes the proof as realization commutes with pullbacks (see Chapter 11 of \cite{loopspaces}).

 \end{proof}

\begin{cor} \label{cor}
There is a homotopy commutative diagram
\[
\xymatrix{B\Lk \times \Coprod{[\beta] \in \hom(\Z_{p}^{n-t},H)/\sim} EC_H(\im \beta) \times_{C_H(\im \beta)} X^{\im \beta} \ar[d] \ar[r] & EH\times_H X \ar[d] \\
			 B\Lk \times \Coprod{[\al] \in \hom(\Z_{p}^{n-t},G)/\sim} EC_G(\im \al) \times_{C_G(\im \al)} (G/H\times X)^{\im \al} \ar[r] & EG \times_G (G/H \times X). }
\]
\end{cor}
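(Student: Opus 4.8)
The plan is to derive Corollary \ref{cor} from Proposition \ref{prop2} by feeding in the two space-level identifications established just before the proposition. First I would apply the equivalence $w$ to both the source $H$ and the source $G$: the space $EH \times_H \Fix_{n-t}^{H}(X)$ is equivalent to $\Coprod{[\beta]} EC_H(\im \beta) \times_{C_H(\im \beta)} X^{\im \beta}$, and the space $EG \times_G \Fix_{n-t}^{G}(G/H \times X)$ is equivalent to $\Coprod{[\al]} EC_G(\im \al) \times_{C_G(\im \al)} (G/H \times X)^{\im \al}$. Crossing both with $B\Lk$ and substituting these equivalences into the left-hand vertical maps of the pullback square in Proposition \ref{prop2} yields exactly the square in the statement of the corollary (the horizontal maps $T$ and the right vertical map being unchanged). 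Since each replacement is a homotopy equivalence, the resulting square is homotopy commutative, which is all that is claimed.

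The one point worth spelling out is that the codomain of the horizontal map in Proposition \ref{prop2} uses $EG \times_G (G/H \times X)$ while the corollary writes the same thing, so no translation is needed there; the only translation is along $w$. I would also note, using the homeomorphism $(G \times_H X)^{\im \al} \cong (G/H)^{\im \al} \times X^{\im \al}$ of $C_G(\im \al)$-spaces recorded before the proposition, that the bottom-left corner of the corollary's square is literally the target of $w$ applied to $\Fix_{n-t}^{G}(G/H \times X)$, so the identification is on the nose at the level of the indexing disjoint union. No genuinely new argument is required.

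The main obstacle — really the only subtlety — is keeping the bookkeeping of the two different centralizer conventions straight: on the $H$-side the disjoint union runs over conjugacy classes $[\beta]$ of maps $\Z_p^{n-t} \to H$ with stabilizer $C_H(\im \beta)$, while on the $G$-side it runs over classes $[\al]$ of maps into $G$ with stabilizer $C_G(\im \al)$, and the left vertical map mixes these via the inclusion $H \hookrightarrow G$ together with Lemma \ref{prop1} (which guarantees that a fixed coset $gH \in (G/H)^{\im \al}$ forces $\im g^{-1}\al g \subseteq H$, so the component is genuinely indexed by an $H$-conjugacy class). This is exactly the combinatorial content that will be unpacked in the proof of the transfer formula, but for the corollary itself it is enough to observe that $w$ is natural in the group in the appropriate sense and that the square of Proposition \ref{prop2} is a strict pullback; homotopy commutativity of the rewritten square is then automatic.
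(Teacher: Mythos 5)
Your proposal is correct and matches the paper's proof, which likewise deduces the corollary immediately from Proposition \ref{prop2} by substituting the equivalence $w$ (for $H$ acting on $X$ and for $G$ acting on $G/H \times X$) into the left-hand column. The extra bookkeeping remarks you include are consistent with how the paper later unpacks the left vertical map, but are not needed for the corollary itself.
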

\begin{proof}
This follows immediately from the previous proposition and the equivalence $w$.
\end{proof}

Note that the right map is an equivalence. In the next section we will spend a significant amount of space analyzing the left map. We will show that it is an equivalence and give a formula for the map.

\begin{prop} \label{prop3}
There is a pullback of spaces
\[
\xymatrix{\Coprod{[\al] \in \hom(\Z_{p}^{n-t},G)/\sim}E(\Lk \times C(\im \al)) \times_{\Lk \times C(\im \al)} (G/H \times X^{\im \al}) \ar[r] \ar[d] & EG \times_G (G/H \times X) \ar[d] \\ \Coprod{[\al] \in \hom(\Z_{p}^{n-t},G)/\sim} B \Lk \times EC(\im \al) \times_{C(\im \al)} X^{\im \al} \ar[r]^-{T \circ (B\Lk \times w)} & EG \times_G X,}
\]
where the map on the right is induced by the projection and the map on the bottom is the topological part of the character map.
\end{prop}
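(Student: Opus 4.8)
The plan is to prove Proposition \ref{prop3} by the same strategy used for Proposition \ref{prop2}: realize all four spaces as classifying spaces of topological groupoids, exhibit the square of groupoids as a strict pullback, and invoke the fact that geometric realization commutes with pullbacks (Chapter 11 of \cite{loopspaces}). First I would identify the groupoid models. The bottom-right corner $EG \times_G X$ is the action groupoid $G \times X \rightrightarrows X$. The top-right corner $EG \times_G (G/H \times X)$ is the action groupoid of $G$ on $G/H \times X$; via the homeomorphism $G \times_H X \cong G/H \times X$ this is equivalently the action groupoid $H \times X \rightrightarrows X$ pushed up, but it is cleanest to keep it as $G \times (G/H \times X) \rightrightarrows G/H \times X$. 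The right vertical map is then induced by the $G$-equivariant projection $G/H \times X \to X$ forgetting the coset, which on morphisms is $(g, (g'H, x)) \mapsto (g, x)$. The bottom-left corner, after fixing conjugacy class representatives, is the disjoint union over $[\alpha]$ of $(\Lambda_k \times C(\im \alpha)) \times X^{\im \alpha} \rightrightarrows X^{\im \alpha}$, and the bottom map $T \circ (B\Lambda_k \times w)$ is on morphisms $(l, c, x \in X^{\im \alpha}) \mapsto (c\,\alpha(l), x)$ — this is exactly the composite of the inclusion-induced map $w$ (which is $(l,c,x) \mapsto (l, c, x)$ landing in the $G$-groupoid via $C(\im\alpha) \hookrightarrow G$) with the character-map morphism assignment $(l,g,x) \mapsto (g\alpha(l),x)$ recalled in Section 3.1.

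Next I would write down the top-left corner as the groupoid $(\Lambda_k \times C(\im \alpha)) \times (G/H \times X^{\im \alpha}) \rightrightarrows G/H \times X^{\im \alpha}$ for each $[\alpha]$, with the two maps to the other corners: down to the bottom-left by the projection $G/H \times X^{\im\alpha} \to X^{\im\alpha}$ (on morphisms $(l,c,(g'H,x)) \mapsto (l,c,x)$), and across to the top-right by the composite sending $(l,c,(g'H,x)) \mapsto (c\,\alpha(l), (g'H, x))$ at the level of morphisms — landing in $G \times (G/H \times X)$ with $X^{\im\alpha} \hookrightarrow X$. The key computation is then the pullback check. On objects it is immediate that $(G/H \times X^{\im\alpha}) \rightrightarrows$ is the fiber product of $X^{\im\alpha}$ and $G/H \times X$ over $X$ (here one uses the homeomorphism $(G/H \times X)^{\im\alpha} \cong (G/H)^{\im\alpha} \times X^{\im\alpha}$ only implicitly — actually the relevant object set upstairs is $G/H \times X^{\im\alpha}$, not $(G/H)^{\im\alpha}\times X^{\im\alpha}$, and one must check the fiber product over $X$ of $X^{\im\alpha}$ with $G/H \times X$ is indeed $G/H \times X^{\im\alpha}$, which is clear). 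On morphisms one checks that a morphism of $G \times (G/H \times X)$ lying over a given morphism of the bottom-left groupoid — i.e. a pair $(g, (g'H,x))$ with $x \in X^{\im\alpha}$ and $(g,x)$ equal to $(c\,\alpha(l), x)$ for the chosen $(l,c)$ — is precisely the data of an element of $(\Lambda_k \times C(\im\alpha)) \times (G/H \times X^{\im\alpha})$; the point is that $g = c\,\alpha(l)$ already forces $g$ to lie in the image subgroup, so no extra constraint appears and the matching is a bijection.

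The main obstacle I expect is bookkeeping rather than mathematics: making the identification of the top-left groupoid precise and compatible with the conjugacy-class-representative decomposition, and checking that the square of groupoids commutes strictly (not just up to natural isomorphism) so that the realization-preserves-pullbacks argument applies verbatim. In particular, one should be careful that the disjoint union over $[\alpha] \in \hom(\Z_p^{n-t},G)/\sim$ on the left matches up correctly under $w$ with the single groupoid $EG \times_G X$ on the right — the map $T \circ (B\Lambda_k \times w)$ is only defined after fixing representatives, and one must verify that the resulting square is independent of these choices up to the relevant homotopy, or simply fix choices once and for all and note the square is then strictly commutative. Once the groupoid square is in hand, the conclusion follows formally. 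I would also remark that, just as in Proposition \ref{prop2}, the right-hand vertical map is visibly a fibration-type map (induced by a projection with fiber $G/H$), so the homotopy pullback and strict pullback agree.
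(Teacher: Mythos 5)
Your proposal is correct and follows the same route as the paper: model all four corners as action groupoids, check the square is a strict pullback on object spaces and on morphism spaces (with the top-left morphism space $(\Lk\times C(\im\al))\times(G/H\times X^{\im\al})$ mapping via $(l,c,(g'H,x))\mapsto(c\al(l),(g'H,x))$), and invoke that realization commutes with pullbacks. You also correctly isolate the point the paper emphasizes --- that $C(\im\al)$ acts diagonally and $\Lk$ acts nontrivially on $G/H$ through $\al$, so $B\Lk$ does not split off --- and your fiber-product computation on morphisms is the right one.
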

\begin{proof}
Once again, viewing the spaces as the realization of topological groupoids makes this easy to see. It is clearly a pullback on the level of spaces of objects and spaces of morphisms. It is important to note that $C(\im \al)$ acts diagonally on $G/H \times X^{\im \al}$ and that $\Lk$ need not act trivially on the elements of $G/H$. This is why $B\Lk$ does not split off as a factor in the pullback.
\end{proof}

Following the proof of Theorem D in \cite{hkr}, consider the decomposition of $\Lk \times C(\im \al)$ spaces
\[
G/H \times X^{\im \al} \cong ((G/H)^{\im \al} \times X^{\im \al}) \coprod ((G/H)^{\im \al} \times X^{\im \al})^c,
\]
where $(-)^c$ denotes the complement. This splits $G/H \times X^{\im \al}$ into the part fixed by the action of $\Lk$ through $\al$ and the part that is not fixed.

Note that we can use this to decompose the pullback
\[
\Coprod{[\al] \in \hom(\Z_{p}^{n-t},G)/\sim}E(\Lk \times C(\im \al)) \times_{\Lk \times C(\im \al)} (G/H \times X^{\im \al})
\]
as the disjoint union of
\[
B\Lk \times \Coprod{[\al] \in \hom(\Z_{p}^{n-t},G)/\sim} EC(\im \al) \times_{C(\im \al)} (G/H\times X)^{\im \al}
\]
and 
\[
\Coprod{[\al] \in \hom(\Z_{p}^{n-t},G)/\sim}E(\Lk \times C(\im \al)) \times_{\Lk \times C(\im \al)} (G/H^{\im \al} \times X^{\im \al})^c.
\]
Also note that when the top map in Proposition \ref{prop3} is restricted to
\[
B\Lk \times \Coprod{[\al] \in \hom(\Z_{p}^{n-t},G)/\sim} EC(\im \al) \times_{C(\im \al)} (G/H\times X)^{\im \al}
\]
then it is just $T \circ w$ for the $G$-space $G/H\times X$. 

\subsection{Some computations}
For applications it is useful to be able to explicitly compute the left vertical map of Corollary \ref{cor}.

Let $i:H \hookrightarrow G$ be the inclusion. Let 
\[
\hom(\Z_{p}^{n-t},G)/\sim
\]
be the set of conjugacy classes of map from $\Z_{p}^{n-t}$ to $G$ under conjugation by $G$.

Consider the map
\[
i_*:\hom(\Z_{p}^{n-t},H)/\sim \text{} \lra{} \hom(\Z_{p}^{n-t},G)/\sim
\]
induced by $i$. 
Then 
\[
i_{*}^{-1}([\al]) = \{[\beta] \in \hom(\Z_{p}^{n-t},H)/\sim | [i \circ \beta] = [\al] \in \hom(\Z_{p}^{n-t},G)/\sim \}.
\]
\begin{prop} \label{proponeandahalf}
There is a bijection
\[
(G/H)^{\im \al}/C(\im \al) \cong i_{*}^{-1}([\al]).
\] 
\end{prop}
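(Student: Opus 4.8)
The plan is to write the bijection down explicitly and then run four short orbit-chasing verifications; the only real care needed is to keep the left and right conjugation conventions straight, since there is no serious conceptual obstacle once Lemma~\ref{prop1} is in hand.

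\emph{Construction of the map.} Given a coset $gH \in (G/H)^{\im \al}$, Lemma~\ref{prop1} gives $\im g^{-1}\al g \subseteq H$, so $g^{-1}\al g:\Z_p^{n-t}\lra{} G$ corestricts to a map $\beta := g^{-1}\al g:\Z_p^{n-t}\lra{} H$. Since $i\circ \beta = g^{-1}\al g$ is $G$-conjugate to $\al$, we have $[\beta]\in i_*^{-1}([\al])$. I would then observe that replacing $g$ by $gh$ with $h\in H$ replaces $\beta$ by $h^{-1}\beta h$, which does not change $[\beta]$, and that replacing $g$ by $cg$ with $c\in C(\im\al)$ leaves $g^{-1}\al g$ literally unchanged because $c$ commutes with every element of $\im\al$. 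Hence $gH\mapsto [\beta]$ descends to a well-defined map
\[
\Theta:(G/H)^{\im \al}/C(\im \al) \lra{} i_*^{-1}([\al]).
\]

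\emph{Bijectivity.} For surjectivity, given $[\beta]\in i_*^{-1}([\al])$ choose $g\in G$ with $g\beta(x)g^{-1}=\al(x)$ for all $x$; then $g^{-1}\al(x)g=\beta(x)\in H$, so $\al(x)\,gH = g\,(g^{-1}\al(x)g)H = gH$, i.e. $gH\in (G/H)^{\im\al}$, and $\Theta([gH])=[\beta]$. For injectivity, suppose $g_1H,g_2H\in (G/H)^{\im\al}$ satisfy $[g_1^{-1}\al g_1]=[g_2^{-1}\al g_2]$ in $\hom(\Z_p^{n-t},H)/\sim$; pick $h\in H$ with $h\,(g_1^{-1}\al(x)g_1)\,h^{-1}=g_2^{-1}\al(x)g_2$ for all $x$ and set $c:=g_2hg_1^{-1}$. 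Then $c\,\al(x)\,c^{-1}=\al(x)$, so $c\in C(\im\al)$, and $c\cdot g_1H = g_2hH = g_2H$, so $g_1H$ and $g_2H$ lie in the same $C(\im\al)$-orbit.

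\emph{Main obstacle.} There is essentially none: the statement is an orbit-counting identity of the same flavor as the classical double coset formula, and Lemma~\ref{prop1} already supplies the one nonformal input, namely that a fixed coset produces a homomorphism landing in $H$. The only thing to watch is the consistency of conventions --- $(G/H)^{\im\al}$ carries a \emph{left} $C(\im\al)$-action; conjugating the representing homomorphism is a \emph{right} action when one changes the coset representative inside $gH$ and is trivial when one multiplies $g$ on the left by a centralizing element; and ``$\sim$'' on the target is $H$-conjugacy while membership in $i_*^{-1}([\al])$ is tested by $G$-conjugacy. With these fixed, the four checks above are immediate.
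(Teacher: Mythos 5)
Your proof is correct and follows essentially the same route as the paper: send $gH$ to $[g^{-1}\al g]$, use Lemma~\ref{prop1} to see this lands in $H$, and check invariance under both the coset ambiguity and the $C(\im\al)$-action. If anything, yours is slightly more complete --- the paper merely asserts the injectivity direction (that $kH \neq cgH$ for all $c$ forces $[k^{-1}\al k]\neq[g^{-1}\al g]$), whereas you prove its contrapositive explicitly via $c = g_2hg_1^{-1}$, and you also spell out surjectivity.
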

\begin{proof}
Let $gH \in (G/H)^{\im \al}$. Send $gH$ to $[g^{-1}\al g]$. Since $gH$ is fixed by $\im \al$, Lemma \ref{prop1} implies that $g^{-1} \al g \subseteq H$. Let $kH \in (G/H)^{\im \al}$ with $kH \neq gH$. If $kH = cgH$ for $c \in C(\im \al)$ then there exists $h \in H$ such that
\[
kh = cg
\] 
and
\[
[h^{-1}k^{-1} \al k h] = [k^{-1}\al k] = [g^{-1}c^{-1} \al cg] = [g^{-1} \al g]
\]
in $\hom(\Z_{p}^{n-t},H)/\sim$.
However, if $kH \neq cgH$ for some $c \in C(\im \al)$ then
\[
[k^{-1} \al k] \neq [g^{-1} \al g] \text{ in } \hom(\Z_{p}^{n-t},H)/\sim
\]
but
\[
[g^{-1}kk^{-1} \al k k^{-1}g] = [g^{-1} \al g] \in \hom(\Z_{p}^{n-t},G)/\sim.
\]
\end{proof}

Fix an $[\al] \in \hom(\Z_{p}^{n-t},G)/\sim$. The homotopy equivalence $w$ of Section 2.3 restricted to the component of $[\al]$ gives the homotopy equivalence
\[
w_{[\al]}:EC(\im \al) \times_{C(\im \al)} (G/H)^{\im \al}\times X^{\im \al} \lra{\simeq} EG\times_G \Coprod{\gamma \in [\al]} (G/H)^{\im \gamma} \times X^{\im \gamma}.
\]
We analyze the inverse equivalences.
\begin{prop}
Let $g_1,\ldots,g_h$ be elements of $G$ such that 
\[
\{g_{1}\al g_{1}^{-1}, \ldots , g_{h} \al g_{h}^{-1}\} = [\al].
\]
Then $g_1,\ldots,g_h$ determine an inverse equivalence to $w_{[\al]}$.
\end{prop}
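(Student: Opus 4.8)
The plan is to write down the inverse equivalence explicitly at the level of topological groupoids, in the spirit of the proof of Theorem D in \cite{hkr}, and then pass to realizations. Abbreviate $Y_\gamma := (G/H)^{\im \gamma}\times X^{\im \gamma}$. Recall that $EG\times_G Y$ is the realization of the action groupoid with object space $Y$ and morphism space $G\times Y$, a morphism $(g,y)$ running from $y$ to $gy$ (Chapter~11 of \cite{loopspaces}); that $G$ permutes the components of $\coprod_{\gamma\in[\al]} Y_\gamma$ by $g\cdot(y\in Y_\gamma)=(gy\in Y_{g\gamma g^{-1}})$; and that, under these identifications, $w_{[\al]}$ is the realization of the inclusion functor $\psi$ from the action groupoid of $C(\im\al)$ on $Y_\al$ into the action groupoid of $G$ on $\coprod_{\gamma\in[\al]} Y_\gamma$ (this is the content of Proposition~4.13 of \cite{ttcm}). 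The $G$-stabilizer of the component $Y_\al$ is exactly the conjugation-stabilizer of $\al$, which is $C(\im\al)$, so $h=|[\al]|=[G:C(\im\al)]$; multiplication by $g_i$ is a homeomorphism $Y_\al\xrightarrow{\sim}Y_{\gamma_i}$ with $\gamma_i:=g_i\al g_i^{-1}$, and by hypothesis $\gamma_1,\dots,\gamma_h$ enumerates $[\al]$. Fix the index $i_0$ with $\gamma_{i_0}=\al$; then $g_{i_0}\in C(\im\al)$.

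First I would define a functor $\phi$ in the backwards direction: on objects, $y\in Y_{\gamma_i}\mapsto g_i^{-1}y\in Y_\al$; on a morphism $(g,y)\colon y\to gy$ with $y\in Y_{\gamma_i}$ and $gy\in Y_{\gamma_j}$, send it to $(g_j^{-1}gg_i,\,g_i^{-1}y)\colon g_i^{-1}y\to g_j^{-1}gy$. The one substantive check is that $g_j^{-1}gg_i$ actually lies in $C(\im\al)$: a morphism $(g,y)$ can take the $\gamma_i$-component to the $\gamma_j$-component only if $g\gamma_i g^{-1}=\gamma_j$, and conjugating this relation on the left by $g_j^{-1}$ and on the right by $g_j$ gives $(g_j^{-1}gg_i)\al(g_j^{-1}gg_i)^{-1}=\al$, so $g_j^{-1}gg_i$ stabilizes $\al$ and hence centralizes $\im\al$. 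Functoriality (the identity $(e,y)$ maps to $g_i^{-1}g_i=e$, and $g_l^{-1}g'gg_i=(g_l^{-1}g'g_j)(g_j^{-1}gg_i)$) and continuity (the $g_i$ act by homeomorphisms) are then immediate. This identification $g_j^{-1}gg_i\in C(\im\al)$, together with the associated bookkeeping, is really the only obstacle; since the $g_i$ are only assumed to hit every conjugate of $\al$ and need not include the identity, one must single out $i_0$ as above and remember that $g_{i_0}\in C(\im\al)$.

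Finally I would exhibit natural isomorphisms showing that $\phi$ is a two-sided inverse to $\psi$. The composite $\phi\circ\psi$ sends $y\in Y_\al$ to $g_{i_0}^{-1}y$ and $k\in C(\im\al)$ to $g_{i_0}^{-1}kg_{i_0}$; since $g_{i_0}\in C(\im\al)$, the constant rule $y\mapsto (g_{i_0}^{-1},y)$ defines a morphism $y\to g_{i_0}^{-1}y$ in the $C(\im\al)$-groupoid, and these assemble into a natural isomorphism from the identity to $\phi\circ\psi$ (naturality is the tautology $g_{i_0}^{-1}k=g_{i_0}^{-1}k$). In the other order, $\psi\circ\phi$ sends $y\in Y_{\gamma_i}$ to $g_i^{-1}y$ and $(g,y)$ to $(g_j^{-1}gg_i,g_i^{-1}y)$, and $y\mapsto (g_i^{-1},y)$ is a natural isomorphism from the identity of the $G$-groupoid to $\psi\circ\phi$. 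The bar construction is functorial and carries a natural isomorphism of functors of topological groupoids to a homotopy between the realized maps, so the realization of $\phi$ is a homotopy inverse to $w_{[\al]}$, as claimed.
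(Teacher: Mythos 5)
Your proof is correct and follows essentially the same route as the paper's: you write down the same inverse functor on objects ($y\mapsto g_i^{-1}y$) and on morphisms ($(g,y)\mapsto(g_j^{-1}gg_i,g_i^{-1}y)$), and verify the same key point that $g_j^{-1}gg_i\in C(\im\al)$, merely phrasing it via the conjugation-stabilizer of $\al$ rather than the paper's direct element computation. You additionally supply the natural isomorphisms witnessing that $\phi$ is a two-sided inverse, a verification the paper explicitly defers as routine, and you correctly handle the small subtlety that the chosen $g_{i_0}$ with $g_{i_0}\al g_{i_0}^{-1}=\al$ need not be the identity.
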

\begin{proof}
We write down the inverse equivalence in terms of the associated topological groupoids. On objects we send
\[
(gH,x) \in (G/H\times X)^{\im g_i \al g_{i}^{-1}} \mapsto (g_{i}^{-1}gH,g_{i}^{-1}x) \in (G/H \times X)^{\im \al}.
\]
The map on morphisms is a bit more complicated. We construct it by using what it needs to do on objects. Recall that $k \in G$ acts on $(G/H \times X)^{\im \gamma}$ by sending
\[
k:(gH,x) \mapsto (kgH,kx) \in (G/H \times X)^{\im k\gamma k^{-1}}.
\]
We may assume that $kg_i \al (kg_i)^{-1} = g_j \al g_{j}^{-1}$ where $g_j \in \{g_1, \ldots , g_h\}$. In order to determine where the morphism $(k,gH,x) \in G \times (G/H \times X)^{\im g_i \al g_{i}^{-1}}$ must map to in $C(\im \al) \times (G/H \times X)^{\im \al}$ consider the following diagram
\[
\xymatrix{(gH,x) \ar[r]^k \ar[d]^{g_{i}^{-1}} & (kgH,kx) \ar[d]^{g_{j}^{-1}} \\ (g_{i}^{-1} gH, g_{i}^{-1}x) & (g_{j}^{-1}kgH,g_{j}^{-1}kx).}
\]
The composite of the horizontal and then right map is the target composed with the map on objects. The left map is the source (projection) and then the map on objects.
We see from this diagram that we must map
\[
(k,gH,x) \in G \times (G/H \times X)^{\im g_i \al g_{i}^{-1}} \mapsto (g_{j}^{-1}kg_i, g_{i}^{-1}gH, g_{i}^{-1}x) \in C(\im \al) \times (G/H \times X)^{\im \al}.
\]
We check that $g_{j}^{-1}kg_i \in C(\im \al)$: Let $a \in \im \al$ then
\[
g_{j}^{-1}kg_ia(g_{j}^{-1}kg_i)^{-1} = g_{j}^{-1}kg_iag_{i}^{-1}k^{-1}g_j = g_{j}^{-1}g_jag_{j}^{-1}g_j = a.
\]
It is not hard (but takes a lot of space) to show that this is in fact an inverse equivalence. 

\end{proof}
We can now provide a formula for the left map in Corollary \ref{cor}:
\[
\xymatrix{\Coprod{[\beta] \in \hom(\Z_{p}^{n-t},H)/\sim} EC_H(\im \beta) \times_{C_H(\im \beta)} X^{\im \beta} \ar[d] \\ \Coprod{[\al] \in \hom(\Z_{p}^{n-t},G)/\sim} EC_G(\im \al) \times_{C_G(\im \al)} (G/H\times X)^{\im \al}.}
\]
We do this by tracing through the diagram
\[
\xymatrix{\Coprod{[\beta] \in \hom(\Z_{p}^{n-t},H)/\sim} EC_H(\im \beta) \times_{C_H(\im \beta)} X^{\im \beta} \ar[r]^-{w} \ar[d] & EH \times_H \Fix_{n-t}^{H}(X) \ar[d] \\
			\Coprod{[\al] \in \hom(\Z_{p}^{n-t},G)/\sim} EC_G(\im \al) \times_{C_G(\im \al)} (G/H\times X)^{\im \al} \ar[r]^-{w} & EG \times_G \Fix_{n-t}^{G}(X)}
\]
using the inverse equivalence described in the previous proposition. 

Fix an $[\al] \in \hom(\Z_{p}^{n-t},G)/\sim$ such that $(G/H)^{\im \al} \neq \emptyset$. Let $g_1,\ldots,g_h$ be elements of $G$ such that 
\[
\{g_{1}\al g_{1}^{-1}, \ldots , g_{h} \al g_{h}^{-1}\} = [\al]
\]
as in the previous proposition. Let $l$ be the cardinality $i^*([\al])$. Without loss of generality, let 
\[
\beta_i := g_i^{-1} \al g_i, \text{ where } i\in {1\ldots l}
\]
be representatives for the elements of $i^*([\al])$. Even more, to simplify the formulas, let us take these representatives to be the chosen ones in the top left corner. 

By using the topological groupoid model for these spaces, we compute the map
\[
\Coprod{\{[\beta_1],\ldots, [\beta_l]\}}EC_H(\im \beta_i)\times_{C_H(\im \beta_i)} X^{\im \beta_i} \lra{} EC_G(\im \al) \times_{C_G(\im \al)} (G/H)^{\im \al} \times X^{\im \al}.
\]
Let $(c,x) \in C_H(\im \beta_i) \times X^{\im \beta_i}$ then we have (on morphism sets)
\[
\xymatrix{(c,x) \in C_H(\im \beta_i) \times X^{\im \beta_i} \ar@{|->}[r] & (c,x) \in H \times X^{\im \beta_i} \ar@{|->}[d] \\ (g_{i}cg_{i}^{-1},g_iH,g_ix) \in C_G(\im \al) \times (G/H\times X)^{\im \al} & (c,eH,x) \in G \times (G/H\times X)^{\im \beta_i} \ar@{|->}[l]}
\] 
We are using the fact that $c \in C_H(\im \beta_i)$ to compute the bottom arrow. 

To show that the map is an equivalence we will show that it is essentially surjective and fully faithful. Essential surjectivity follows easily from Lemma \ref{prop1}. Thus it suffices to show that the map induces an isomorphism on automorphism groups. Let $(gH,gx) \in (G/H \times X)^{\im \al}$ be hit by $x \in X^{\im g^{-1} \al g}$ under the map defined above. Consider the stabilizers $\text{Stab}(gH,gx) \subseteq C_G(\im \al)$ and $\text{Stab}(x) \subseteq C_H(\im g^{-1} \al g)$. These map to each other by conjugation by $g$. This is clearly injective. We show that conjugation by $g^{-1}$ produces an isomorphism. Consider $c \in \text{Stab}(gH,gx) \subseteq C_G(\im \al)$. We have that
\[
cgx = gx
\]
and thus $g^{-1}cg$ stabilizes $x$. This is not enough though; we must show that $g^{-1}cg \in C_H(\im g^{-1} \al g)$. Clearly $g^{-1}cg$ centralizes $\im g^{-1} \al g$ and also
\[
cgH = gH
\]
implies that $g^{-1}cg \in H$. We have proved the following: 
\begin{prop}\label{equivprop}
Fix an $[\al] \in \hom(\Z_{p}^{n-t},G)/\sim$ such that $(G/H)^{\im \al} \neq \emptyset$. Let $g_1,\ldots,g_h$ be elements of $G$ such that 
\[
\{g_{1}\al g_{1}^{-1}, \ldots , g_{h} \al g_{h}^{-1}\} = [\al].
\]
This determines an equivalence
\[
\Coprod{[\beta] \in i_{*}^{-1}[\al]} EC_H(\im \beta) \times_{C_H(\im \beta)} X^{\im \beta} \simeq EC_G(\im \al) \times_{C_G(\im \al)} (G/H\times X)^{\im \al}.
\]
\end{prop}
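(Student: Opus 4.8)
The plan is to realize the stated map as an equivalence of topological groupoids and then invoke the fact that geometric realization sends equivalences of groupoids to homotopy equivalences (and commutes with disjoint unions, cf.\ Chapter 11 of \cite{loopspaces}). Both sides of the asserted equivalence are realizations of action groupoids: the target is the action groupoid of $C_G(\im\al)$ on $(G/H\times X)^{\im\al}$, and the source is the disjoint union, over $[\beta]\in i_{*}^{-1}[\al]$, of the action groupoids of $C_H(\im\beta)$ on $X^{\im\beta}$. Taking the $\beta_i = g_i^{-1}\al g_i$ as representatives of $i_{*}^{-1}[\al]$, the comparison functor is the one traced out just before the statement: on objects $x\in X^{\im\beta_i}\mapsto (g_iH, g_i x)$, and on morphisms $(c,x)\mapsto (g_i c g_i^{-1}, g_iH, g_i x)$, where conjugation by $g_i$ carries $C_H(\im\beta_i)$ into $C_G(\im\al)$ as checked above. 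It then remains to prove that this functor is essentially surjective and fully faithful.

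For essential surjectivity, take $(gH,y)\in (G/H\times X)^{\im\al}$. Lemma \ref{prop1} gives $\im g^{-1}\al g\subseteq H$, and a one-line check shows $g^{-1}y\in X^{\im g^{-1}\al g}$; since $g^{-1}\al g$ is $G$-conjugate to $\al$, its $H$-conjugacy class lies in $i_{*}^{-1}[\al]$ and hence equals some $[\beta_i]$. Choosing $h\in H$ conjugating $g^{-1}\al g$ to $\beta_i$, the element $c=(gh)g_i^{-1}$ centralizes $\im\al$ and carries the image of $h^{-1}g^{-1}y\in X^{\im\beta_i}$ to $(gH,y)$, so $(gH,y)$ is isomorphic in the target groupoid to an object in the image. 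This is, concretely, the content of Proposition \ref{proponeandahalf}.

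For full faithfulness I would separate the two standard requirements for a functor of groupoids. First, the functor must reflect isomorphism classes of objects: objects in distinct $[\beta_i]$-components have images lying over $C_G(\im\al)$-inequivalent cosets $g_iH$ by Proposition \ref{proponeandahalf}, and within a single component the stabilizer computation below shows two objects with isomorphic images are already isomorphic. Second, the functor must induce isomorphisms on automorphism groups: writing an object in the image as $(gH,gx)$ with $x\in X^{\im g^{-1}\al g}$, conjugation by $g$ maps $\mathrm{Stab}_{C_H(\im g^{-1}\al g)}(x)$ into $\mathrm{Stab}_{C_G(\im\al)}(gH,gx)$ injectively, and conjugation by $g^{-1}$ is inverse to it — the key point being that for $c\in \mathrm{Stab}_{C_G(\im\al)}(gH,gx)$ the element $g^{-1}cg$ fixes $x$ (from $cgx=gx$), centralizes $\im g^{-1}\al g$ (since $c$ centralizes $\im\al$), and lies in $H$ (from $cgH=gH$), so it lands back in $C_H(\im g^{-1}\al g)$.

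Combining these, the comparison functor is an equivalence of topological groupoids, and passing to realizations yields the claimed homotopy equivalence. The step I expect to be the main obstacle is full faithfulness, specifically verifying that conjugation by $g^{-1}$ lands inside $C_H(\im g^{-1}\al g)$ rather than merely inside some larger stabilizer: one has to use all three constraints on $c$ simultaneously and confirm that the two conjugation maps really are mutually inverse. Everything else amounts to organizing the (somewhat lengthy) bookkeeping around the multiple conjugates $g_1,\dots,g_h$, as the surrounding discussion anticipates.
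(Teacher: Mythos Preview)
Your proposal is correct and follows essentially the same approach as the paper: model both sides as action groupoids, verify essential surjectivity via Lemma~\ref{prop1} (and Proposition~\ref{proponeandahalf}), and establish full faithfulness by checking that conjugation by $g$ and $g^{-1}$ gives mutually inverse isomorphisms between the relevant stabilizers, with the key point being exactly the one you flag---that $g^{-1}cg$ lands in $C_H(\im g^{-1}\al g)$ because $cgH=gH$ forces $g^{-1}cg\in H$. Your treatment is in fact slightly more careful than the paper's in separating out the ``reflects isomorphism classes'' part of full faithfulness, which the paper leaves implicit.
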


\begin{remark}One of the main things to take away from this discussion is the following: Consider $(G/H)^{\im \al}$ with the action by $C_G(\im \al)$. Let $gH \in (G/H)^{\im \al}$, then the stabilizer of $gH$ is precisely $gC_H(g^{-1} \im \al g)g^{-1}$. 

It is important to note that, even if $\im \al \subseteq H$ and $gH \in (G/H)^{\im \al}$, the inclusion
\[
C_H(\im g^{-1} \al g) \subseteq g^{-1}C_H(\im \al)g
\]
need not be an equality because $g$ is not necessarily in $H$. 
\end{remark}
\subsection{Properties of transfers}
Taking our cue from Section 6.5 of \cite{hkr} (who follow \cite{infiniteloop}, Chapter 4), we consider the following properties of the transfer map associated to a finite covering of spaces $W \lra{} Z$ for a cohomology theory $E$:

\begin{enumerate}
\item the transfer associated to the identity map is the identity map;
\item if $W_1 \coprod W_2 \lra{} Z$ is a disjoint union of finite coverings, then the transfer map 
\[
E^*(W_1) \oplus E^*(W_2) \lra{} E^*(Z)
\]
is the sum of the transfer maps associated to the coverings $W_1 \lra{} Z$ and $W_2 \lra{} Z$;
\item the transfer $E^*(W) \lra{} E^*(Z)$ is a map of $E^*(Z)$-modules;
\item if
\[
\xymatrix{W_1 \ar[r] \ar[d] & W \ar[d] \\ Z_1 \ar[r] & Z}
\]
is a fiber square, then the diagram
\[
\xymatrix{E^*(W_1) \ar[d]^{\text{Tr}} & E^*(W) \ar[l] \ar[d]^{\text{Tr}} \\ E^*(Z_1) & E^*(Z) \ar[l]}
\]
commutes.
\end{enumerate}

We also need direct analogues of Lemma 6.12 and Corollary 6.13 of \cite{hkr}.
\begin{prop}
If $A \subset \Lk$ is a proper subgroup, then the composite
\[
\E^*(BA) \lra{\text{Tr}} \E^*(B\Lk) \lra{} C_{t}^*
\]
is zero.
\end{prop}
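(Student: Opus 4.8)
The plan is to reduce the vanishing of the composite to a single application of the projection formula in $\E^0(B\Lk)$, and then to invoke the defining localization of $C_t$. Since $\E$ is even periodic and $BA$ has Morava $E$-cohomology concentrated in even degrees, it is enough to show that the composite vanishes on $\E^0(BA)$; write $\rho \colon \E^0(B\Lk) \to C_t$ for the structure map (the composite $\E^0(B\Lk) \to \LE^0 \otimes_{\E^0} \E^0(B\Lk) \to C_t' \to C_t$) through which $\E^*(B\Lk) \to C_t^*$ is induced.

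First I would choose an auxiliary character. As $A$ is a proper subgroup of the finite abelian $p$-group $\Lk$, the quotient $\Lk/A$ is nontrivial, so there is a nonzero homomorphism $\chi \colon \Lk \to \QZ$ with $\chi|_A = 0$. Let $L_\chi$ be the associated complex line bundle on $B\Lk$ and $e(L_\chi) \in \E^0(B\Lk)$ its Euler class. Because $\chi|_A = 0$, the restriction of $L_\chi$ to $BA$ is trivial, so the restriction of $e(L_\chi)$ to $\E^0(BA)$ vanishes. For any $y \in \E^0(BA)$, the projection formula (property (3) of transfers, which says that $\Tr$ is a map of $\E^0(B\Lk)$-modules) then gives
\[
\Tr_{\E}(y) \cdot e(L_\chi) = \Tr_{\E}\big(y \cdot \operatorname{res}^{\Lk}_A e(L_\chi)\big) = \Tr_{\E}(0) = 0
\]
in $\E^0(B\Lk)$, so $\rho(\Tr_{\E}(y)) \cdot \rho(e(L_\chi)) = 0$ in $C_t$.

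It therefore suffices to know that $\rho(e(L_\chi))$ is a unit in $C_t$. The structure map $\rho$ classifies the homomorphism $\Lk^* \to \G_{\E}(C_t)$ obtained by restricting to $\Lk^*$ the inclusion $\QZ^{n-t} \hookrightarrow \G_{\E}(C_t)$ of the étale summand; under the identification $\E^0(B\Lk) = \Sect_{\Hom(\Lk^*, \G_{\E})}$ this sends $e(L_\chi)$ to the coordinate of the image of $\chi$ in $\G_{\E}(C_t)$. Since $\chi \neq 0$ and this homomorphism is injective, that image is a nonzero point of $\QZ^{n-t} \subseteq \G_{\E}(C_t)$, and $C_t = S^{-1}C_t'$ is defined precisely by inverting the coordinates of the nonzero points in the image of $\QZ^{n-t}$; hence $\rho(e(L_\chi)) \in C_t^{\times}$, and dividing gives $\rho(\Tr_{\E}(y)) = 0$, as wanted.

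The main thing requiring care — and essentially the only place the argument could break — is the compatibility between the topologically defined maps (restriction, Euler classes, transfers) and the algebro-geometric description of the structure map $\rho$ from \cite{tgcm}: one must verify that $\rho(e(L_\chi))$ really is the coordinate of the relevant torsion point and that this element lies in the multiplicative set $S$ inverted when forming $C_t$. Once that dictionary is in hand the proof is immediate, and in particular it needs neither an explicit formula for $\Tr_{\E}(1)$ nor a reduction to subgroups of index $p$ — only a character of $\Lk$ vanishing on $A$ together with the fact that nonzero points of the étale part of $\G_{\E}(C_t)$ have invertible coordinates.
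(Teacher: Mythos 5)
Your proof is correct and takes essentially the same route as the paper: the paper's proof simply observes that the construction of $C_t$ parallels that of $C_{0}^* = L(\E^*)$ and that the proof of Lemma 6.12 of \cite{hkr} goes through, and that HKR argument is precisely the one you spell out --- choose a character of $\Lk$ killing $A$, apply the projection formula to its Euler class, and note that this Euler class becomes a unit because $C_t = S^{-1}C_t'$ inverts the coordinates of the nonzero points of $\QZ^{n-t}$. The only content you add is making explicit the dictionary identifying $\rho(e(L_\chi))$ with such a coordinate, which is exactly the point the paper leaves implicit.
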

\begin{proof}
The construction of $C_{t}^*$ parallels the construction of $C_{0}^* = L(\E^*)$ from \cite{hkr}. Their proof goes through.
\end{proof}

\begin{cor} \label{emptycor}
Suppose that $Y$ is a trivial $\Lk$-space, and that $J$ is a finite $\Lk$-set with 
\[
J^{\Lk} = \emptyset.
\]
Then the composite
\[
\E^*(E\Lk \times_{\Lk} (J\times Y)) \lra{\Tr} \E^*(B\Lk \times Y) \lra{} C_{t}^* \otimes_{\LE^*} \LE^*(Y)
\]
is zero.
\end{cor}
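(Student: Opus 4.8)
The plan is to reduce Corollary \ref{emptycor} to the preceding proposition by decomposing the $\Lk$-set $J$ into orbits and using the standard formal properties of transfers listed at the start of this subsection. First I would write $J \cong \coprod_i \Lk/A_i$ as a disjoint union of orbits; since $J^{\Lk} = \emptyset$, each stabilizer $A_i \subsetneq \Lk$ is a \emph{proper} subgroup. Because $Y$ is a trivial $\Lk$-space, we have a homeomorphism
\[
E\Lk \times_{\Lk} (J \times Y) \cong \Coprod{i} (E\Lk \times_{\Lk} \Lk/A_i) \times Y \simeq \Coprod{i} BA_i \times Y,
\]
and the covering map to $B\Lk \times Y$ restricts on each piece to $(BA_i \times Y) \to (B\Lk \times Y)$, i.e.\ the product of the covering $BA_i \to B\Lk$ with $\mathrm{id}_Y$. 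By property (2), the transfer $\E^*(E\Lk \times_{\Lk}(J\times Y)) \to \E^*(B\Lk \times Y)$ is the sum over $i$ of the transfers associated to these covers.

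Next I would handle a single summand. The cover $BA_i \times Y \to B\Lk \times Y$ is the base change of $BA_i \to B\Lk$ along the projection $B\Lk \times Y \to B\Lk$; alternatively one can simply observe it is a product of covers. In either case the transfer $\E^*(BA_i \times Y) \to \E^*(B\Lk \times Y)$ is compatible with the $\LE^*(Y)$-module structure (property (3)) and with base change (property (4)), so composing with the map $\E^*(B\Lk \times Y) \to C_t^* \otimes_{\LE^*} \LE^*(Y)$ the whole thing is obtained from the composite
\[
\E^*(BA_i) \lra{\Tr} \E^*(B\Lk) \lra{} C_t^*
\]
by applying $- \otimes_{\LE^*} \LE^*(Y)$ (after smashing with $Y$ and using that $\E^*(B\Lk \times Y)$, $\E^*(BA_i \times Y)$ are computed from $\E^*(B\Lk)$, $\E^*(BA_i)$ by tensoring with the finite free module $\LE^*(Y)$ over $\E^*$, via the Atiyah--Hirzebruch/Künneth picture already used implicitly in \cite{tgcm}). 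Since the previous proposition says that composite is zero, each summand vanishes, hence so does the sum.

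The main obstacle will be making precise the assertion that the transfer and the projection to $C_t^*(Y) := C_t^* \otimes_{\LE^*} \LE^*(Y)$ behave well with respect to the finiteness of $Y$ — i.e.\ that one really can ``factor out'' the $Y$-direction and reduce to the point. The cleanest route is to invoke property (4): the square
\[
\xymatrix{BA_i \times Y \ar[r] \ar[d] & BA_i \ar[d] \\ B\Lk \times Y \ar[r] & B\Lk}
\]
is a pullback of finite covers, so the transfers commute with the restriction maps along the top and bottom horizontal arrows. Combined with the fact that the map $\E^*(B\Lk \times Y) \to C_t^*\otimes_{\LE^*}\LE^*(Y)$ is the base change along $\E^0 \to C_t$ of the restriction-of-scalars map and is natural in the $B\Lk$-variable, one gets that the composite in question is the image under $- \otimes_{\E^*}\LE^*(Y)$ (really $-\otimes$ the relevant bimodule) of the zero map of the previous proposition, hence zero. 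I would also note that this is exactly the argument of \cite{hkr}, Corollary 6.13, transported to $C_t$, and the only thing being checked is that their four formal properties of transfers hold here, which is immediate since $\E$ and $C_t$ are complex-orientable (indeed $C_t$ is a localization of a quotient of an $\E$-algebra).
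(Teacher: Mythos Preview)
Your argument is correct and is exactly the approach the paper has in mind: the paper's proof is the single line ``This follows immediately from the previous proposition and the proof of Corollary 6.13 in \cite{hkr}'', and what you have written is precisely a spelling-out of the HKR 6.13 argument (decompose $J$ into orbits $\Lk/A_i$ with $A_i$ proper, use property (2) to sum, and use the pullback square with property (4) to strip off $Y$ and reduce to the previous proposition). One small cosmetic point: you don't need the K\"unneth/finite-freeness remark about $\LE^*(Y)$ --- the module-theoretic reduction already follows from property (3) applied to the $\E^*(B\Lk \times Y)$-module structure together with property (4), exactly as in \cite{hkr}.
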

\begin{proof}
This follows immediately from the previous proposition and the proof of Corollary 6.13 in \cite{hkr}.
\end{proof}

The following will be useful for later computations.
\begin{prop} \label{augmentation}
Let $t > 0$. Assume that $H \subset G$ is a subgroup and that $p$ divides the order of $G/H$. Let $I_{\text{aug}}$ be the kernel of the map 
\[
C_{t}^{*}(BG) \lra{} C_{t}^{*}(Be),
\] 
where $e$ is the trivial subgroup. The image of the transfer
\[
C_{t}^{*}(BH) \lra{\Tr} C_{t}^{*}(BG)
\]
is contained in the ideal $(p)+I_{\text{aug}}$.
\end{prop}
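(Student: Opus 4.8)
The plan is to reduce the statement to a computation of the composite
\[
C_{t}^{*}(BH) \xrightarrow{\Tr} C_{t}^{*}(BG) \xrightarrow{\mathrm{res}} C_{t}^{*}(Be),
\]
and then to observe that this composite is divisible by the index $[G:H]$, hence by $p$. The structural fact that makes this reduction work is that $\mathrm{res}\colon C_{t}^{*}(BG) \to C_{t}^{*}(Be) = C_{t}^{*}$ is a split surjection: the unit map $\eta\colon C_{t}^{*} \to C_{t}^{*}(BG)$ pulled back from $BG \to *$ is a section, since $Be \to BG \to *$ is the identity of $*$. Consequently every $y \in C_{t}^{*}(BG)$ can be written
\[
y = \eta(\mathrm{res}(y)) + \big(y - \eta(\mathrm{res}(y))\big),
\]
where the second summand lies in $\ker(\mathrm{res}) = I_{\mathrm{aug}}$. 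So in order to prove $\Tr(x) \in (p) + I_{\mathrm{aug}}$ it suffices to show that $\mathrm{res}(\Tr(x)) \in (p) \subseteq C_{t}^{*}$, which is exactly the content of the divisibility claim.

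To compute $\mathrm{res}(\Tr(x))$ I would use the base-change property (4) of transfers from Section 2.4, together with (1) and (2). Pulling the finite covering $BH \to BG$ back along the basepoint inclusion $Be \to BG$ produces the trivial $[G:H]$-sheeted covering $\coprod_{G/H} * \to *$, fitting into a pullback square
\[
\xymatrix{\coprod_{G/H} * \ar[r] \ar[d] & BH \ar[d] \\ Be \ar[r] & BG.}
\]
Because $BH$ is connected, on each of the $[G:H]$ components the top map $* \to BH$ is homotopic to the basepoint inclusion $Be \to BH$, so the induced map $C_{t}^{*}(BH) \to C_{t}^{*}\big(\coprod_{G/H} *\big) = \prod_{G/H} C_{t}^{*}$ is the diagonal of the restriction $C_{t}^{*}(BH) \to C_{t}^{*}(Be)$. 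Combining property (4) with additivity (2) and the normalization (1) — the transfer of the one-sheeted covering $* \to *$ is the identity — then yields
\[
\mathrm{res}(\Tr(x)) = \sum_{gH \in G/H} \mathrm{res}^{H}_{e}(x) = [G:H]\cdot \mathrm{res}^{H}_{e}(x).
\]

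Since $p$ divides $|G/H| = [G:H]$ by hypothesis, the right-hand side lies in $(p) \subseteq C_{t}^{*}$, and the reduction of the first paragraph completes the proof. (The hypothesis $t > 0$ is used only to make the conclusion non-vacuous: when $t = 0$ the ring $C_{0}$ is rational, so $(p)$ is already the unit ideal.) The only step that requires any care is the identification of the pullback square above and the verification that its top edge is componentwise the basepoint inclusion of $BH$ — and that is elementary; once it is in hand the argument is a formal consequence of the transfer axioms recorded in Section 2.4. One could alternatively deduce the formula $\mathrm{res}\circ\Tr^{G}_{H} = [G:H]\cdot\mathrm{res}^{H}_{e}$ from the usual double coset formula for $e \subseteq G$, but the base-change argument keeps us within the language of the paper.
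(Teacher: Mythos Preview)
Your proof is correct and is essentially the same as the paper's: both apply the base-change property (4) to the pullback of $BH \to BG$ along $Be \to BG$ (the paper phrases this pullback in terms of $G$-sets before passing to Borel constructions), then use additivity (2) to identify the resulting transfer as multiplication by $|G/H|$. You are simply more explicit than the paper about the splitting step that converts $\mathrm{res}(\Tr(x)) \in (p)$ into $\Tr(x) \in (p) + I_{\mathrm{aug}}$.
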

\begin{proof}
The proof is an application of Properties 4 and 2 above. Consider the pullback diagram of $G$-sets
\[
\xymatrix{G \times G/H \ar[r] \ar[d] & G/H \ar[d] \\ G \ar[r] & G/G}.
\]
The group $G$ acts freely on the pullback so it is isomorphic to $\Coprod{G/H}G$. Applying Property 4 we get the commutative diagram
\[
\xymatrix{\Prod{G/H}C_{t}^{*} \ar[d]^{\Tr} & C_{t}^{*}(BH) \ar[l] \ar[d]^{\Tr} \\ C_{t}^{*} & C_{t}^{*}(BG) \ar[l].}
\]
The left arrow is just multiplication by $|G/H|$ by Property 2.
\end{proof}


\subsection{Transfers for transchromatic character maps}
We use the properties of transfer maps and the pullbacks and decompositions discussed in the previous section to provide a formula relating transfer maps for $\E$ and $C_t$ and the transchromatic generalized character maps.

Before proving the theorem we establish one bit of notation. Because of the equivalence
\[
EG\times_G \Fix^{G}_{n-t}(X) \simeq \Coprod{[\al] \in \hom(\Z_{p}^{n-t},G)/\sim} EC(\im \al) \times_{C(\im \al)} X^{\im \al},
\]
the character map can be viewed as landing in the product of rings
\[
\Phi_{G}^{t}:\E^*(EG\times_G X) \lra{} \Prod{[\al] \in \hom(\Z_{p}^{n-t},G)/\sim}C_{t}^{*}(EC(\im \al) \times_{C(\im \al)} X^{\im \al}).
\]
We define
\[
\Phi_{G}^{t}[\al]:\E^*(EG\times_G X) \lra{} C_{t}^*(EC(\im \al) \times_{C(\im \al)} X^{\im \al})
\]
to be $\Phi_{G}^{t}$ composed with projection onto the factor of $[\al]$.

\begin{thm} \label{mainthm}
Let $H \subseteq G$ and $X$ be a finite $G$-space. Let $\Phi_{G}^{t}$ and $\Phi_{H}^{t}$ be the transchromatic generalized character maps associated to the groups $H$ and $G$. Then for $x \in \E^*(EH\times_H X)$ there is an equality
\[
\Phi^{t}_{G}[\al](\Tr_{\E}(x)) = \sum_{[gH] \in (G/H)^{\im \al}/C(\im \al)} \Tr_{C_t}(\Phi^{t}_{H}[g^{-1}\al g](x)).
\]
\end{thm}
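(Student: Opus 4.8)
The plan is to adapt the proof of Theorem~D of \cite{hkr}. Write the character map as $\Phi_{G}^{t}=\sigma\circ T^{*}$, where $T$ is the topological map of Section~2.1 and $\sigma$ is the second of the two maps whose composite is $\Phi_{G}^{t}$, i.e.\ the ``$C_t$-specialization'' $\E^{*}(B\Lk\times Z)\to C_{t}^{*}(Z)$; note that $\sigma$ commutes with transfers in the space variable $Z$, since Properties~3 and~4 of transfers together with the Künneth splitting $\E^{*}(B\Lk\times Z)\cong\E^{*}(B\Lk)\otimes_{\E^{*}}\E^{*}(Z)$ force the transfer for a covering $B\Lk\times W\to B\Lk\times Z$ to be $\mathrm{id}_{\E^{*}(B\Lk)}\otimes\Tr$. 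Under the equivalence $EH\times_{H}X\simeq EG\times_{G}(G/H\times X)$ the map $\Tr_{\E}$ becomes the transfer $\Tr_{q}$ along the covering $q\colon EG\times_{G}(G/H\times X)\to EG\times_{G}X$. Now fix $[\al]$, with $k$ chosen as in Section~2.2, and restrict the pullback square of Proposition~\ref{prop3} to its $[\al]$-component; call its bottom, top and left maps $\tau_{[\al]}$, $p_{[\al]}$, $r_{[\al]}$, so $\tau_{[\al]}$ is the topological part of $\Phi_{G}^{t}[\al]$ for the $G$-space $X$ and the top-left corner is $E(\Lk\times C(\im\al))\times_{\Lk\times C(\im\al)}(G/H\times X^{\im\al})$. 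Property~4 gives $\tau_{[\al]}^{*}\circ\Tr_{q}=\Tr_{r_{[\al]}}\circ p_{[\al]}^{*}$, and applying $\sigma$ to the left-hand side produces $\Phi_{G}^{t}[\al]\circ\Tr_{\E}$.

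Next, decompose the top-left corner as in Section~2.3 into the part fixed by $\Lk$ acting through $\al$, namely $B\Lk\times EC(\im\al)\times_{C(\im\al)}(G/H\times X)^{\im\al}$, and its complement $J\times X^{\im\al}$ with $J=(G/H)\setminus(G/H)^{\im\al}$ a finite $\Lk$-set satisfying $J^{\Lk}=\emptyset$; split $\Tr_{r_{[\al]}}=\Tr^{\mathrm{fix}}+\Tr^{\mathrm{nonfix}}$ by Property~2. The crux is that $\sigma\circ\Tr^{\mathrm{nonfix}}=0$. Since $X^{\im\al}$ is a trivial $\Lk$-space and $J^{\Lk}=\emptyset$, this is precisely the situation of Corollary~\ref{emptycor}, the only wrinkle being the commuting $C(\im\al)$-action that is also present; I expect this to need only a routine $C(\im\al)$-equivariant strengthening of that corollary, obtained by decomposing $J$ into orbits $(\Lk\times C(\im\al))/S$ and observing that each has $\ker(S\to C(\im\al))$ a proper subgroup of $\Lk$ (because $\Lk\times 1$ is central in $\Lk\times C(\im\al)$, so $\Lk\times 1\not\subseteq S$), which is exactly the input the proof of Corollary~6.13 of \cite{hkr} uses. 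This vanishing step is the main obstacle, being the only place where the specific nature of $C_t$ is essential.

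It remains to unravel the fixed contribution. By the remark following Proposition~\ref{prop3}, the restriction of $p_{[\al]}$ to the fixed part is the topological part of the character map $\Phi_{G}^{t}[\al]$ for the $G$-space $G/H\times X$, so $\sigma$ of $\Tr^{\mathrm{fix}}$ applied to $p_{[\al]}^{*}x$ equals $\Tr_{C_t}\bigl(\Phi_{G}^{t}[\al](x)\bigr)$, where now $\Phi_{G}^{t}[\al]$ is taken for $G/H\times X$, $x$ is regarded in $\E^{*}(EG\times_{G}(G/H\times X))=\E^{*}(EH\times_{H}X)$, and $\Tr_{C_t}$ is the $C_t$-transfer along the covering $EC(\im\al)\times_{C(\im\al)}(G/H\times X)^{\im\al}\to EC(\im\al)\times_{C(\im\al)}X^{\im\al}$; thus $\Phi_{G}^{t}[\al](\Tr_{\E}(x))=\Tr_{C_t}\bigl(\Phi_{G}^{t}[\al](x)\bigr)$. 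Finally, Proposition~\ref{equivprop} identifies the source of this covering with $\Coprod{[\beta]\in i_{*}^{-1}[\al]}EC_H(\im\beta)\times_{C_H(\im\beta)}X^{\im\beta}$, and since that equivalence is the $[\al]$-component of the left-hand map of Corollary~\ref{cor}, which by Proposition~\ref{prop2} is compatible with the topological maps $T$, it carries $\Phi_{G}^{t}[\al](x)$ (for $G/H\times X$) to the tuple $(\Phi_{H}^{t}[\beta](x))_{[\beta]\in i_{*}^{-1}[\al]}$; by Property~2 the transfer splits as the corresponding sum of $C_t$-transfers, and Proposition~\ref{proponeandahalf} re-indexes $i_{*}^{-1}[\al]$ as $(G/H)^{\im\al}/C(\im\al)$ via $[gH]\mapsto[g^{-1}\al g]$, yielding $\sum_{[gH]\in(G/H)^{\im\al}/C(\im\al)}\Tr_{C_t}(\Phi_{H}^{t}[g^{-1}\al g](x))$, the asserted formula. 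The stabilizer computation in the Remark after Proposition~\ref{equivprop} then identifies the $[gH]$-transfer, when $X=\ast$, as the one along $gC_H(g^{-1}\im\al g)g^{-1}\subseteq C_G(\im\al)$.
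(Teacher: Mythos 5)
Your proposal is correct and follows essentially the same route as the paper's proof: the pullback square of Proposition \ref{prop3} together with Property 4 of transfers, the fixed/non-fixed decomposition of $G/H\times X^{\im\al}$ with Corollary \ref{emptycor} killing the non-fixed contribution, and Propositions \ref{prop2}, \ref{equivprop}, and \ref{proponeandahalf} to identify the fixed contribution and re-index it over $(G/H)^{\im\al}/C(\im\al)$. Your remark that Corollary \ref{emptycor} needs a $C(\im\al)$-equivariant strengthening is a point the paper passes over silently, and your orbit argument (that $S\cap(\Lk\times 1)$ is proper in $\Lk$ on the non-fixed part) is the right way to supply it.
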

\begin{proof}
Fix an $\al:\Z_{p}^{n-t} \lra{} G$. Our goal is to analyze $\Phi_{G}^{t}[\al]$. 

We begin by applying $\E$ to the pullback diagram from Proposition \ref{prop3} specialized to $[\al]$. We get the diagram
\[
\xymatrix{\E^*(EG \times_G (G/H \times X)) \ar[r] \ar[d]^{\Tr} & \E^*(E(\Lk \times C_G(\im \al)) \times_{\Lk \times C_G(\im \al)} (G/H \times X^{\im \al})) \ar[d]^{\Tr} \\ \E^*(EG \times_G X) \ar[r] & \E^*(B \Lk \times EC_G(\im \al) \times_{C_G(\im \al)} X^{\im \al}).} 
\]
Using the decomposition noted at the end of Subsection \ref{twopb} and Corollary \ref{emptycor}, on the right hand side of square above we can restrict our attention to
\[
\xymatrix{\E^*(B\Lk \times EC_G(\im \al) \times_{C_G(\im \al)} (G/H)^{\im \al} \times X^{\im \al}) \ar[d] \\ \E^*(B \Lk \times EC_G(\im \al) \times_{C_G(\im \al)} X^{\im \al}).}
\]
Now using the square from Proposition \ref{prop2} we arrive at the commutative diagram
\[
\xymatrix@=8pt{\E^*(EH \times_H X) \ar[r] \ar[dd]^{\cong} & \E^*(B\Lk \times \Coprod{[\beta] \in i_{*}^{-1}[\al]}EC_H(\im \beta) \times_{C_H(\im \beta)} X^{\im \beta}) \ar[dd]^{\cong} \\ \\ \E^*(EG \times_G (G/H \times X)) \ar[r] \ar[dd]^{\Tr} & \E^*(B\Lk \times EC_G(\im \al) \times_{C_G(\im \al)} (G/H)^{\im \al} \times X^{\im \al}) \ar[dd]^{\Tr} \\ \\ \E^*(EG \times_G X) \ar[r] & \E^*(B \Lk \times EC_G(\im \al) \times_{C_G(\im \al)} X^{\im \al}).} 
\]
The top right isomorphism follows from Proposition \ref{equivprop}. All of the horizontal maps are portions of the topological part of the transchromatic generalized character map. Applying the algebraic part of the transchromatic generalized character map and the fact that transfers commute with maps of cohomology theories (the transfer map is just a map of spectra), we get
\[
\xymatrix{\E^*(EH \times_H X) \ar[r]^-{\prod \Phi_{H}^{t}[\beta]} \ar[d]^{\Tr} & \Prod{[\beta] \in i_{*}^{-1}[\al]}C_{t}^{*}(EC_H(\im \beta) \times_{C_H(\im \beta)} X^{\im \beta}) \ar[d]^{\sum \Tr} \\ \E^*(EG \times_G X) \ar[r]^-{\Phi_{G}^{t}[\al]} & C_{t}^{*}(EC_G(\im \al) \times_{C_G(\im \al)} X^{\im \al}).} 
\]
By Proposition \ref{proponeandahalf} the top right corner of this square can be rewritten as
\[
\Prod{[gH] \in (G/H)^{\im \al}/C_G(\im \al)}C_{t}^{*}(EC_H(\im g^{-1}\al g) \times_{C_H(\im g^{-1}\al g)} X^{\im g^{-1}\al g}).
\]
\end{proof}

\begin{cor} \label{maincor}
Let $\al: \Z_{p}^{n-t} \lra{} G$, $H \subseteq G$, and $gH \in (G/H)^{\im \al}$. When $X=*$ the transfer map in the formula can be taken to be along the inclusion
\[
gC_H(g^{-1} \im \al g)g^{-1} \subseteq C_G(\im \al).
\]
\end{cor}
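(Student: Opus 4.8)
The plan is to trace through the proof of Theorem \ref{mainthm} in the case $X = \ast$ and pin down the covering space responsible for the transfer $\sum\Tr$ appearing there. Recall that, after applying $\E$ to the pullback square of Proposition \ref{prop3} and discarding the non-fixed part via the decomposition at the end of Subsection \ref{twopb} together with Corollary \ref{emptycor}, the operative transfer on the $[\al]$-factor is the one associated to the finite covering
\[
B\Lk \times EC_G(\im\al)\times_{C_G(\im\al)}\big((G/H)^{\im\al}\times X^{\im\al}\big)\lra{} B\Lk\times EC_G(\im\al)\times_{C_G(\im\al)} X^{\im\al}
\]
induced by collapsing $(G/H)^{\im\al}$. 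Setting $X = \ast$ makes $X^{\im\al} = \ast$, so this becomes the covering $B\Lk\times EC_G(\im\al)\times_{C_G(\im\al)}(G/H)^{\im\al}\to B\Lk\times BC_G(\im\al)$ whose fibre is the finite $C_G(\im\al)$-set $(G/H)^{\im\al}$.

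First I would apply the additivity of transfers over disjoint unions (Property 2 among the transfer properties recalled above) to the decomposition of $(G/H)^{\im\al}$ into $C_G(\im\al)$-orbits; this writes the transfer as the sum over $[gH]\in(G/H)^{\im\al}/C_G(\im\al)$ of the transfers associated to the single-orbit coverings $B\Lk\times EC_G(\im\al)\times_{C_G(\im\al)}\big(C_G(\im\al)\cdot gH\big)\to B\Lk\times BC_G(\im\al)$, which matches the indexing set of Theorem \ref{mainthm}. Next I would identify each single-orbit covering: by the Remark following Proposition \ref{equivprop} the stabilizer of $gH$ under the $C_G(\im\al)$-action is exactly $gC_H(g^{-1}\im\al g)g^{-1}$, so $C_G(\im\al)\cdot gH\cong C_G(\im\al)/gC_H(g^{-1}\im\al g)g^{-1}$ and, up to the inert $B\Lk$ factor, the covering is $B\big(gC_H(g^{-1}\im\al g)g^{-1}\big)\to BC_G(\im\al)$, i.e.\ the map induced by the inclusion $gC_H(g^{-1}\im\al g)g^{-1}\subseteq C_G(\im\al)$. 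Its transfer is therefore $\Tr_{C_t}$ along that inclusion.

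Finally I would check this agrees with the summand $\Tr_{C_t}(\Phi^t_H[g^{-1}\al g](x))$ in Theorem \ref{mainthm}: when $X=\ast$ the class $\Phi^t_H[g^{-1}\al g](x)$ lives in $C_t^*(BC_H(g^{-1}\im\al g))$, and the explicit equivalence of Proposition \ref{equivprop} identifies $C_H(\im g^{-1}\al g)$ with $gC_H(g^{-1}\im\al g)g^{-1}\subseteq C_G(\im\al)$ precisely via conjugation by $g$, under which the two transfers coincide (conjugate subgroups have canonically homotopy equivalent classifying spaces with homotopic maps into $BC_G(\im\al)$). I expect the only real obstacle to be bookkeeping: confirming that the conjugation-by-$g$ identification coming from the inverse equivalence in Proposition \ref{equivprop} is the same as the orbit–stabilizer identification of the covering, so that no spurious extra conjugation creeps in. No input beyond Theorem \ref{mainthm} and the transfer properties recalled above is required.
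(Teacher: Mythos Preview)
Your proposal is correct and follows essentially the same route as the paper, just in much greater detail: the paper's proof is a single sentence invoking the Remark (after Proposition \ref{equivprop}) that the stabilizer of $gH$ in $(G/H)^{\im\al}$ under the $C_G(\im\al)$-action is $gC_H(g^{-1}\im\al g)g^{-1}$, which is exactly the key input you use. Your additional unpacking of the covering and the orbit--stabilizer identification is sound but not required once that Remark is in hand.
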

\begin{proof}
This follows from the remark at the end of Subsection \ref{twopb}.
\end{proof}

\begin{remark}
This is a higher chromatic analogue for the formula for the character of an induced representation. For $H \subseteq G$, $u \in G$, and $\chi$ a class function on $H$,
\begin{align*}
\chi \uparrow_{H}^{G}(u) &= \frac{1}{|H|} \sum_{g \in G, \text{ } g^{-1}ug \in H} \chi(g^{-1}ug) \\
&= \sum_{gH \in (G/H)^u} \chi(g^{-1}ug) \\
&= \sum_{[gH] \in (G/H)^u/C(u)}[C_G(u):gC_H(g^{-1} u g)g^{-1}]\chi(g^{-1}ug).
\end{align*}
\end{remark}

\section{Decomposing the Subgroup Scheme}
We use the transfer maps constructed in the previous section to calculate how the scheme
\[
\sub_{k}(\G_{\E}) = \Spec \E^0(B\Sigma_{p^k})/I_{tr}
\]
decomposes under base change to $C_t$. We provide an algebro-geometric interpretation of the resulting decomposition.
\subsection{Recollections}
In Section 10 of \cite{subgroups}, Strickland defines a formal scheme 
\[
\sub_{k}(\G_{\E}),
\]
which represents the functor
\[
\sub_{k}(\G_{\E}):\text{complete Noetherian local $\E^0$-algs} \lra{} \text{Set}
\]
that sends
\[
R \mapsto \{\text{subgroup schemes of order $p^k$ of } R\otimes \G_{\E}\}.
\]
The main algebro-geometric result that we need regarding $\sub_{k}(\G_{\E})$ is Theorem 10.1 of \cite{subgroups}.
\begin{thm} (\cite{subgroups}, Theorem 10.1)
For any continuous map $\E^0 \lra{} S$,
\[
S \otimes \sub_k(\G_{\E}) \cong \sub_k(S \otimes \G_{\E}).
\]
The projection $\sub_k(\G_{\E}) \lra{} \Spf(\E^0)$ is a finite free map of degree
\[
d = \text{number of subgroups of } \QZ^{n} \text{ of order } p^k.
\]
The scheme $\sub_k(\G_{\E})$ is Gorenstein.
\end{thm}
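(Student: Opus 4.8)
The plan is to follow Strickland's approach in \cite{subgroups}: construct the representing object concretely so that base change is transparent, bound it inside a scheme of divisors to obtain finiteness, and then prove flatness via a Cohen--Macaulayness argument, reading the degree off the generic fibre. Concretely, a finite subgroup scheme of order $p^k$ in $R\otimes\G_{\E}$ is, forgetting its group structure, an effective Cartier divisor of degree $p^k$ on the formal curve $R\otimes\G_{\E}$, and the functor of such divisors is represented by a formal affine space $\Spf R\powser{a_1,\dots,a_{p^k}}$ on the coefficients of a ``Weierstrass polynomial'' in a chosen coordinate; demanding that the divisor be closed under the formal addition cuts out a closed formal subscheme, and this construction is manifestly natural in $R$. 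Since $(S\otimes_{\E^0}\G_{\E})\otimes_S R \cong \G_{\E}\otimes_{\E^0}R$ as formal groups, the isomorphism $S\otimes\sub_k(\G_{\E})\cong\sub_k(S\otimes\G_{\E})$ is immediate from this description.

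For finiteness, by Deligne's theorem a finite flat commutative group scheme of order $p^k$ is killed by $p^k$, so every such subgroup scheme lies in $\G_{\E}[p^k]$, which is finite free over $\E^0$ of rank $p^{nk}$ since $\G_{\E}$ has height $n$; the scheme of degree-$p^k$ divisors supported on a fixed finite free $\E^0$-scheme of rank $p^{nk}$ is finite over $\E^0$, and $\sub_k(\G_{\E})$ is a closed subscheme of it, so $\sub_k(\G_{\E})\to\Spf(\E^0)$ is finite. Now $\E^0\cong W(\F_{p^n})\powser{u_1,\dots,u_{n-1}}$ is regular local of dimension $n$, so a finite $\E^0$-algebra is free exactly when it is Cohen--Macaulay of dimension $n$, and a finite Cohen--Macaulay $\E^0$-algebra attains dimension $n$ once it is supported on all of $\Spec(\E^0)$. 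Both the special fibre (a height-$n$ formal group over $\bar\F_p$ carries the Frobenius-kernel chain $\G[F]\subseteq\G[F^2]\subseteq\cdots$, giving subgroups of every order $p^k$) and the generic fibre are nonempty, and the image of a finite morphism is closed, so the structure map is surjective and the source is $n$-dimensional. Granting Cohen--Macaulayness, $\sub_k(\G_{\E})$ is therefore finite free over $\E^0$, and its degree equals the length of the generic fibre: over an algebraic closure $\overline{K}$ of $\operatorname{Frac}(\E^0)$, of characteristic $0$, the $p$-divisible group $\G_{\E}[p^{\infty}]$ is étale of height $n$, hence constant and isomorphic to $\QZ^n$, so $\sub_k(\G_{\E})_{\overline{K}}$ is the reduced scheme of subgroups of $\QZ^n$ of order $p^k$ and has exactly $d$ points.

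The main obstacle is the Cohen--Macaulay (and then Gorenstein) assertion, which is where the real work lies; here I would follow Strickland. Introduce Drinfeld-style level-structure schemes $\level(A,\G_{\E})$ for finite abelian $p$-groups $A$ generated by at most $n$ elements, prove --- generalizing the Drinfeld--Katz--Mazur analysis of the height-$1$ case --- that their coordinate rings are finite free over $\E^0$ and are complete intersections, and exhibit $\sub_k(\G_{\E})$ as the disjoint union over isomorphism classes of $A$ of order $p^k$ of the quotients $\level(A,\G_{\E})/\Aut(A)$, the key point being that a subgroup scheme together with a choice of generators is precisely a level-$A$ structure with that image. One then checks that each such quotient stays Cohen--Macaulay and contributes the expected number of geometric points to the degree, and deduces the Gorenstein property by identifying the dualizing module through a self-duality pairing, reducing ultimately to the complete-intersection property of the level-structure rings. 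Showing that these quotients assemble into a finite free, Gorenstein scheme of the asserted degree is the technical heart of the argument and the step I expect to require the most care.
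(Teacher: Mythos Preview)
The paper does not prove this statement; it is quoted from Strickland's \cite{subgroups} as background, so there is no proof in the present paper to compare against beyond Strickland's original.

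Your overall architecture --- construct $\sub_k$ inside the divisor scheme so that base change is automatic, bound inside $\G_{\E}[p^k]$ via Deligne to get finiteness, deduce freeness from Cohen--Macaulayness over the regular local base $\E^0$, and read the degree off the \'etale generic fibre where $\G_{\E}$ becomes $\QZ^n$ --- matches the shape of Strickland's argument, and those steps are fine.

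There is, however, a genuine gap in your route to Cohen--Macaulayness. The claim that $\sub_k(\G_{\E})$ decomposes as the disjoint union of $\level(A,\G_{\E})/\Aut(A)$ over isomorphism types $A$ of order $p^k$ is false for $k\ge 2$. Over the residue field of $\E^0$ the formal group is connected, so $\G_{\E}(\bar\F_p)=0$ and every Drinfeld level-$A$ structure there is the zero homomorphism; its image divisor is the unique connected subgroup scheme of order $p^k$, namely $\ker F^k$. Hence all of the $\level(A)$, for every isomorphism type $A$, map to the \emph{same} closed point of $\sub_k$. In particular $\Gamma\sub_k(\G_{\E})$ is a local ring (finite over the complete local ring $\E^0$ with a one-point special fibre), so it admits no nontrivial product decomposition at all. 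The slogan ``subgroup scheme plus choice of generators equals level-$A$ structure'' is exactly what fails in characteristic $p$: a connected group scheme has no nonzero points to serve as generators, and Drinfeld's definition is arranged precisely so that distinct $A$'s can share the same image subgroup over the special fibre.

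The level schemes do enter Strickland's argument, but not through a product decomposition: one instead uses the map $\level(A,\G_{\E})\to\sub_k(\G_{\E})$ for a single sufficiently large $A$ and an averaging over $\Aut(A)$ to exhibit $\Gamma\sub_k$ as a module-theoretic direct \emph{summand} of the finite free $\E^0$-module $\Gamma\level(A)$, hence itself free. Replacing your disjoint-union step with this summand argument repairs the outline; the Gorenstein assertion then requires a separate duality analysis, as you anticipate.
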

Note that Strickland's results are more general because they apply to an arbitrary formal group $\G$. Here we have presented his theorem specialized to $\G_{\E}$, the formal group associated to Morava $\E$. We will not use that the scheme is Gorenstein here.

Following Strickland, we call subgroups of $\Sigma_{p^k}$ of the form $\Sigma_i \times \Sigma_j$ with $i,j>0$ proper partition subgroups. Let $I_{tr}$ be the ideal of $\E^0(B\Sigma_{p^k})$ generated by the images of the transfers of the proper partition subgroups. In \cite{etheorysym}, Strickland proves the main topological result regarding $\sub_{k}(\G_{\E})$.
\begin{thm} \label{strickland} (\cite{etheorysym}, Proposition 9.1)
There is an isomorphism
\[
\Spf (\E^0(B\Sigma_{p^k})/I_{tr}) \cong \sub_{k}(\G_{\E}).
\]
\end{thm}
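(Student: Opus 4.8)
This is Strickland's theorem; I indicate the strategy one would follow. The plan is to produce a natural transformation $f\colon \Spf(\E^0(B\Sigma_{p^k})/I_{tr})\to \sub_{k}(\G_{\E})$, to check that it is a closed immersion, and to finish with a rank count. The input, proved earlier in \cite{etheorysym}, is that $\E^0(B\Sigma_{p^k})$ is a finitely generated free $\E^0$-module in even degrees and that, modulo $I_{tr}$, it is generated as an $\E^0$-algebra by the Chern classes $c_1,\dots,c_{p^k}$ of the tautological permutation representation $\Sigma_{p^k}\hookrightarrow U(p^k)$. These Chern classes package into a monic Weierstrass polynomial of degree $p^k$, i.e.\ an effective divisor $D$ of degree $p^k$ on $\G_{\E}\otimes(\E^0(B\Sigma_{p^k})/I_{tr})$. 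One checks on the abelian subgroups $A\hookrightarrow\Sigma_{p^k}$ acting by their regular representations (where the permutation representation restricts to the regular representation of $A$, whose Chern roots are the coordinates of the $|A|$ points of $A\subseteq\G_{\E}$) that $D$ is, up to the usual sign conventions, the divisor of a finite subgroup scheme; this defines $f$. Since the $c_i$ are then, up to sign, the Weierstrass coefficients of the universal subgroup divisor --- hence pulled back from $\sub_{k}(\G_{\E})$ --- the map $f^{*}\colon\mathcal{O}_{\sub_{k}(\G_{\E})}\to\E^0(B\Sigma_{p^k})/I_{tr}$ hits the algebra generators, so $f$ is a closed immersion.

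The crux is precisely the assertion that $D$ is a subgroup scheme modulo $I_{tr}$: equivalently, that the ideal of ``subgroup relations'' inside the scheme of degree-$p^k$ divisors on $\G_{\E}$ equals the ideal generated by the transfers from the proper partition subgroups $\Sigma_i\times\Sigma_j$. Here the topology of the transfers is confronted with the group law: the ``union'' maps $B\Sigma_i\times B\Sigma_j\to B\Sigma_{i+j}$ realize addition of divisors and the ``product'' maps $B\Sigma_i\times B\Sigma_j\to B\Sigma_{ij}$ realize pushforward along $+\colon\G_{\E}\times\G_{\E}\to\G_{\E}$ (the Chern roots of an external tensor product of permutation representations are the sums $r_i +_{\G} s_j$), and the subgroup relations become polynomial identities among the $c_i$ whose correction terms lie in the transfer ideal because the relevant stabilizers sit inside proper partition subgroups. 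I would organize the induction on $k$ via the refinement $\Sigma_{p^{k-1}}\wr\Sigma_p\subseteq\Sigma_{p^{k}}$, using the power operations on $\E$ (\cite{Isogenies}) to identify a suitable quotient of $\E^0(B(\Sigma_{p^{k-1}}\wr\Sigma_p))$ with the data of a subgroup of order $p$ in $\G_{\E}$ together with a subgroup of order $p^{k-1}$ in the quotient (Lubin--Tate) formal group $\G_{\E}/H_1$, and then pulling back along $\G_{\E}\to\G_{\E}/H_1$.

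It remains to see that $f$ is an isomorphism, by comparing ranks. By \cite{subgroups}, Theorem 10.1, $\mathcal{O}_{\sub_{k}(\G_{\E})}$ is free of rank $d$, the number of subgroups of order $p^k$ in $\QZ^{n}$. For $\E^0(B\Sigma_{p^k})/I_{tr}$, the $t=0$ character theory of \cite{hkr} decomposes $\E^0(B\Sigma_{p^k})$ rationally according to conjugacy classes of continuous actions of $\Z_{p}^{n}$ on a $p^k$-element set; by the transfer formula above, the transfers from a proper partition subgroup $\Sigma_i\times\Sigma_j$ vanish on the component of a transitive action (no nonempty proper invariant subset exists), and one shows they cut out the remaining components, so passing to the quotient retains exactly the contributions of the transitive actions, each of rank one. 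These are in bijection with the open subgroups of index $p^k$ in $\Z_{p}^{n}$, i.e.\ Pontryagin-dually with the subgroups of order $p^k$ in $\QZ^{n}$, so $\E^0(B\Sigma_{p^k})/I_{tr}$ has rank $d$ over the fraction field of $\E^0$ as well. Since $f^{*}$ is a surjection from the free module $\mathcal{O}_{\sub_{k}(\G_{\E})}$ onto a module of the same generic rank, its kernel is torsion and therefore zero; hence $f$ is an isomorphism.

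The main obstacle is the middle step --- proving that killing the partition-subgroup transfers forces the universal divisor to be closed under the group law. This is the one genuinely geometric input, where the transfer maps for $\E$ must be matched directly against the formal group law (and against the behaviour of power operations under isogeny); the closed-immersion statement and the rank count are comparatively formal.
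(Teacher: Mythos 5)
This theorem is not proved in the paper at all: it is imported verbatim from \cite{etheorysym} (Proposition 9.1), so there is no in-paper argument to compare against. Measured against Strickland's actual proof, your outline gets the two formal ends right and they do match his: the surjectivity of $\mathcal{O}_{\sub_k(\G_{\E})}\to \E^0(B\Sigma_{p^k})/I_{tr}$ comes from the Chern classes of the permutation representation generating the quotient, and the rank count via the $t=0$ character theory of \cite{hkr} (transitive $\Z_p^n$-sets $\leftrightarrow$ subgroups of order $p^k$ in $\QZ^{n}$) is exactly his; your concluding step is sound because $\E^0$ is a domain, so the kernel of a surjection from a free module of rank $d$ onto a module of generic rank $d$ is both torsion and torsion-free, hence zero.

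The gap is the step you yourself flag as the crux: that modulo $I_{tr}$ the permutation-representation divisor is a subgroup divisor, i.e.\ that $\mathcal{O}_{\mathrm{Div}_{p^k}(\G_{\E})}\to\E^0(B\Sigma_{p^k})/I_{tr}$ factors through $\mathcal{O}_{\sub_k(\G_{\E})}$. You assert that the ``subgroup relations'' among the $c_i$ hold up to corrections lying in the transfer ideal, and propose to establish this by an induction through $\Sigma_{p^{k-1}}\wr\Sigma_p$ using power operations; none of this is carried out, and it is not how Strickland argues. He avoids verifying any polynomial identities directly: he shows that the product of restriction maps $\E^0(B\Sigma_{p^k})/I_{tr}\to\prod_{A\in\bar A}\E^0(BA)/I_{tr}\cong\prod_{A\in\bar A}\mathcal{O}_{\level(A^*,\G_{\E})}$, taken over the transitive abelian subgroups $A\subseteq\Sigma_{p^k}$ acting by their regular representations, is \emph{injective}; on each factor the permutation divisor restricts to the image divisor of a level structure, which is a subgroup divisor by \cite{subgroups}, and injectivity then forces the factorization through $\mathcal{O}_{\sub_k(\G_{\E})}$ globally. (This is precisely the commutative square the present paper quotes from Strickland's proof inside the proof of Theorem \ref{thm2}.) So the one genuinely geometric input is exactly the piece your sketch does not supply, and the alternative route you suggest would in any case still require that injectivity statement or an equivalent detection principle.
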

Lemma 8.11 of \cite{etheorysym} implies that we need only consider the ideal generated by the image of the transfer from $\Sigma_{p^{k-1}}^{\times p}$ to $\Sigma_{p^k}$ (under the obvious inclusion). Proposition 5.2 of \cite{subgroups} gives an isomorphism
\[
\sub_{k}(\G_{\E}) = \sub_{k}(\G_{\E}[p^k]),
\]
where $\G_{\E}[p^k]$ is the $p^k$-torsion of $\G_{\E}$.

Let $A$ be a finite abelian group. In Section 7 of \cite{subgroups}, Strickland constructs a formal scheme
\[
\level(A, \G_{\E}): \text{complete local Noetherian $\E^0$-algs} \lra{} \text{Set}
\]
that sends an $\E^0$-algebra $R$ to the level $A$-structures of $R \otimes \G_{\E}$. We recall this scheme because it will show up in the proof of Theorem \ref{thm2}.

Recall that there is a topological definition of $\G_{\E}[p^k]$:
\[
\Gamma (\G_{\E}[p^k]) = \E^0(B\Zp{k}).
\]
With a coordinate, by the Weierstrass preparation theorem, there are isomorphisms
\[
\Gamma(\G_{\E}[p^k]) \cong \E^0\powser{x}/[p^k]_{\G_{\E}}(x) \cong \E^0[x]/(f(x)),
\]
where $[p^k]_{\G_{\E}}(x)$ is the $p^k$-series of the formal group law and $f(x)$ is a monic polynomial of degree $p^{kn}$.

Because $\G_{\E}[p^k]$ is finite and free over $\Spf_{I_n}(\E^0)$ we may consider it over $\Spec(\E^0)$. Then it is a functor
\[
\G_{\E}[p^k]: \E^0 \text{-algebras} \lra{} \text{Abelian Groups}.
\]

Both of the formal schemes $\sub_k(\G_{\E})$ and $\level(A,\G_{\E})$ can be viewed as non-formal schemes as well without difficulty because they are finite and free over $\Spf(\E^0)$. We get 
\[
\sub_k(\G_{\E}): \E^0\text{-algebras} \lra{} \text{Set}
\]
sending an $\E^0$-algebra $R$ to the collection of subgroup schemes of order $p^k$ in $R\otimes \G_{\E}[p^k]$ (viewed as a non-formal scheme). By its definition the functor retains the property that
\[
R \otimes \sub_k(\G_{\E}) \cong \sub_k(R \otimes \G_{\E}).
\]

From now on we will write $\sub_k(\G_{\E})$ for the scheme over $\Spec(\E^0)$.

\subsection{Examples}
The goal of this section is to apply Theorem \ref{mainthm} to $\E^0(B\Sigma_{p^k})$ in some very particular examples in order to understand the effect of base change to $C_t$ on $\sub_k(\G_{\E})$.

A direct application of Theorem \ref{mainthm} provides a decomposition of $C_t \otimes \sub_k(\G_{\E})$ as a disjoint union of smaller schemes. Consider $\Sigma_{p^{k-1}}^{\times p} \subseteq \Sigma_{p^k}$. Theorem \ref{mainthm} gives the commutative square of rings
\[
\xymatrix{\E^0(B\Sigma_{p^{k-1}}^{\times p}) \ar[r] \ar[d]^{\Tr_{\E}} & \Prod{[\beta] \in \hom(\Z_{p}^{n-t}, \Sigma_{p^{k-1}}^{\times p})/\sim} C_{t}^{0}(BC(\im \beta)) \ar[d] \\
		\E^0(B\Sigma_{p^k}) \ar[r] & \Prod{[\al] \in \hom(\Z_{p}^{n-t}, \Sigma_{p^k})/\sim} C_{t}^{0}(BC(\im \al))}
\]
with the property that, after base change to $C_t$, there are isomorphisms
\[
\xymatrix{C_t\otimes_{\E^0}\E^0(B\Sigma_{p^{k-1}}^{\times p}) \ar[r]^-{\cong} \ar[d] & \Prod{[\beta] \in \hom(\Z_{p}^{n-t}, \Sigma_{p^{k-1}}^{\times p})/\sim} C_{t}^{0}(BC(\im \beta)) \ar[d] \\
		C_t\otimes_{\E^0}\E^0(B\Sigma_{p^k}) \ar[r]^-{\cong} & \Prod{[\al] \in \hom(\Z_{p}^{n-t}, \Sigma_{p^k})/\sim} C_{t}^{0}(BC(\im \al)).}
\]
By taking the quotient by the ideal generated by the image of the transfer we get the isomorphism
\begin{equation} \label{equa}
C_t \otimes_{\E^0} \E(B \Sigma_{p^k})/I_{tr} \cong \Prod{[\al] \in \hom(\Z_{p}^{n-t}, \Sigma_{p^k})/\sim} C_{t}^{0}(BC(\im \al))/I_{tr}^{[\al]},
\end{equation}
where
\[
I_{tr}^{[\al]} \subseteq C_{t}^{0}(BC(\im \al)).
\]
Theorem \ref{mainthm} allows us to compute $I_{tr}^{[\al]}$.
By Theorem \ref{strickland} the left hand side of isomorphism (\ref{equa}) is the global sections of 
\begin{eqnarray*}
C_t\otimes\sub_k(\G_{\E}) &\cong & C_t \otimes \sub_k(\G_{\E}[p^k]) \\
&\cong& \sub_k(C_t\otimes \G_{\E}[p^k]) \\
&\cong& \sub_k(\G_{C_t}[p^k]\oplus (\Zp{k})^{n-t}).
\end{eqnarray*}
The right hand side of isomorphism (\ref{equa}) is a product of $C_t$-algebras indexed by
\[
\hom(\Z_{p}^{n-t}, \Sigma_{p^k})/\sim.
\]
Of course, some of the $C_t$-algebras may be zero after taking the quotient by the images of the transfers. 

We apply Theorem \ref{mainthm} in some particular examples in order to study the phenomena described above. 
\begin{example} \label{firstexample}
The purpose of this example is to use Theorem \ref{mainthm} to compute the decomposition of $\sub_1(\G_{\E})$ after base change to $C_{n-1}$. Let $G = \Sigma_p$ and $H = e = \Sigma_{1}^{p}$. Then $H$ is the subgroup of $G$ that we use to define $I_{tr}$. There are precisely two conjugacy classes in
\[
\hom(\Z_p,\Sigma_p)
\]
corresponding to the trivial map and the map picking out the cyclic subgroup of order $p$. The centralizer of the image of the trivial map is $\Sigma_p$ and the centralizer of $\Z/p \subseteq \Sigma_p$ is just $\Z/p$. Thus the transchromatic generalized character map is an isomorphism
\[
C_{n-1}\otimes_{\E^0}\E^0(B\Sigma_p) \lra{\cong} C_{n-1}^{0}(B\Sigma_p) \times C_{n-1}^{0}(B\Z/p).
\]
Theorem \ref{mainthm} allows us to calculate the transfer
\[
C_{n-1}^{0} \lra{} C_{n-1}^{0}(B\Sigma_p) \times C_{n-1}^{0}(B\Z/p).
\]
The map to the first factor is a sum over $\Sigma_p/\Sigma_p \simeq *$ and Corollary \ref{maincor} gives the transfer from $e$ to $\Sigma_p$ for the cohomology theory $C_{n-1}$. The map on the second factor is a sum of transfers over 
\[
(G/H)^{\im \al}/C(\im \al) = (\Sigma_p/e)^{\Z/p}/\Z/p = \emptyset.
\]
Thus the map to the second factor is just the zero map. 

Let $\sub_1(\G_{C_{n-1}})$ be $\Spec(C_{n-1}^{0}(B\Sigma_p)/I_{tr}^{[e]})$, where $I_{tr}^{[e]}$ is the ideal generated by the image of the transfer from $e \subset \Sigma_p$.
We conclude that
\[
C_{n-1}\otimes \sub_1(\G_{\E}) \cong \sub_1(\G_{C_{n-1}}) \coprod \G_{C_{n-1}}[p].
\]

When $p=2$ it is easy to use a coordinate to calculate this map explicitly because $\Sigma_2 \cong \Z/2$. The isomorphism comes from the decomposition of $C_{n-1}\otimes \sub_1(\G_{\E})$ coming from the projection
\[
\G_{C_{n-1}} \oplus \Q_2/\Z_2 \lra{} \Q_2/\Z_2.
\]  
A subgroup of order $2$ can project onto $e \subset \Q_2/\Z_2$ or $\Z/2 \subset \Q_2/\Z_2$. If a subgroup projects onto $e$ then it is a subgroup of order two in $\sub_1(\G_{C_{n-1}})$. If the subgroup projects onto $\Z/2$ then every two torsion element $r \in \G_{C_{n-1}}$ defines a new subgroup of order two, the subgroup generated $(r,1)$.

For general $p$, the decomposition arises in the same way. The easiest way to see this is by considering the surjection
\[
\level(\Z/p, \G_{\E}) \lra{} \sub_1(\G_{\E}).
\]
This is how we proceed in the proof of the Theorem \ref{thm2}.

\end{example}

Before coming to the main theorem we work one more example.

\begin{example}
For this example let $p = 2$, and $t=n-1$. Let $G = \Sigma_4$ and $H = \Sigma_2 \times \Sigma_2$. Thus we are interested in understanding what topology has to say about the decomposition of 
\[
\sub_2(\G_{E_n})
\]
after base change to $C_{n-1}$.

There are precisely four conjugacy classes in
\[
\hom(\Z_2,\Sigma_4)
\]
corresponding to the cycle decompositions of $2$-power order elements. It is easy to check that
\begin{eqnarray*}
C(e) &\cong& \Sigma_4 \\
C((12)) &\cong& \Z/2\times \Z/2 \\
C((12)(34)) &\cong& D_8 \\
C((1234)) &\cong& \Z/4. 
\end{eqnarray*}
The transchromatic generalized character map is an isomorphism
\[
C_{n-1}\otimes_{E_{n}^{0}}E_{n}^{0}(B\Sigma_4) \cong C_{n-1}^{0}(B\Sigma_4) \times C_{n-1}^{0}(B\Z/2\times \Z/2) \times C_{n-1}^{0}(BD_8) \times C_{n-1}^{0}(B\Z/4).
\]
The transfer associated to $\Sigma_4$ is just the transfer from $\Sigma_2\times \Sigma_2$. The centralizer of $(12)$ in $H = \Sigma_2 \times \Sigma_2$ is $H$ and this implies that the transfer along $C_H((12)) \subseteq C_G((12))$ is the identity map. The centralizer $C_{\Sigma_2 \times \Sigma_2}((12)(34)) \subseteq \Sigma_2\times \Sigma_2$ is the whole group. Thus the transfer for $D_8$ is the transfer along $\Sigma_2 \times \Sigma_2 \subset C_{\Sigma_4}((12)(34))$. The transfer associated to $\Z/4$ is the zero map. 

Thus the scheme decomposes into the parts
\begin{equation} \label{decomp}
C_{n-1} \otimes \sub_2(\G_{E_n}) \cong \sub_2(\G_{C_{n-1}}) \coprod \G_{C_{n-1}}[4] \coprod X,
\end{equation}
where $X$ is the component (or components) corresponding to $D_8$.

Once again there is a natural decomposition of this sort from the algebraic geometry. 
The projection
\[
\G_{C_{n-1}} \oplus \Q_2/\Z_2 \lra{} \Q_2/\Z_2
\]
induces a map
\[
\sub_2(\G_{C_{n-1}}\oplus \Q_2/\Z_2) \lra{} \sub_{\leq 2}(\Q_2/\Z_2).
\]
The fibers of the points in the base consist of the subgroups that map to $e$, $\Z/2$, and $\Z/4$ in $\Q_2/\Z_2$.

The first two components in the decomposition (\ref{decomp}) seem to come from the subgroups that map onto $e$ and $\Z/4$ in $\Q_2/\Z_2$. Thus the third component must correspond to the subgroups that map to $\Z/2$ in $\Q_2/\Z_2$. Theorem \ref{thm2} implies that this is precisely the decomposition captured by the character map. That is, the scheme
\[
\Spec(C_{n-1}^{0}(BD_8)/I_{tr}^{[(12)(34)]})
\]
represents subgroup schemes of order four in $\G_{C_{n-1}} \oplus \Q_2/\Z_2$ that project onto $\Z/2 \subset \Q_2/\Z_2$.

\end{example}

\subsection{The decomposition}
Consider the projection
\[
C_t \otimes \G_{\E} \cong \G_{C_t} \oplus \QZ^{n-t} \lra{} \QZ^{n-t}.
\]
This induces a surjective map of schemes
\[
\sub_k(\G_{C_t} \oplus \QZ^{n-t}) \lra{} \sub_{\leq k}(\QZ^{n-t}).
\]
In this section we prove that the decomposition of
\[
\sub_k(\G_{C_t} \oplus \QZ^{n-t})
\]
as the disjoint union of the fibers of this map is a maximal decomposition and that the transchromatic generalized character map and Theorem \ref{mainthm} give precisely this decomposition.

\begin{lemma}
For any finite group $G$ the ring $C_{t}^{0}(BG)$ is connected.
\end{lemma}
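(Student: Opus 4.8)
The plan is to reduce the connectedness of $C_t^0(BG)$ to the connectedness of $C_t^0$ itself, and then to the connectedness of $\LE^0$, which is well known since $\LE^0$ is a (localized) complete local ring. First I would recall that idempotents in $C_t^0(BG)$ correspond to decompositions of $\Spec C_t^0(BG)$ into disjoint open-closed pieces, and that by the transchromatic character map $\Phi_G^t$, which is injective after tensoring $\E^0(BG)$ up to $C_t$ (more precisely, $C_t \otimes_{\E^0} \E^0(BG) \to \prod_{[\al]} C_t^0(BC(\im\al))$ is an isomorphism, so $C_t^0(BG)$ is a retract/factor of something built from $C_t^0(BC(\im\beta))$ for smaller-image subgroups), an induction on the order of $G$ is available. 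However, the cleanest route avoids the character map: the ring map $C_t^0 \to C_t^0(BG)$ exhibits $C_t^0(BG)$ as a finitely generated free $C_t^0$-module (since $\E^0(BG)$ is finitely generated free over $\E^0$ and $C_t = S^{-1}C_t'$ is flat over $\LE^0$), and the augmentation (restriction to the trivial subgroup) $C_t^0(BG) \to C_t^0$ splits this inclusion.

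The key steps, in order: (1) Observe $\E^0 \to C_t^0$ is a composite of a flat base change $\E^0 \to \LE^0$ and the localization $C_t' \to C_t = S^{-1}C_t'$, hence $C_t^0(BG) = C_t^0 \otimes_{\E^0} \E^0(BG)$ is free of finite rank over $C_t^0$, with the rank equal to $\dim_{\E^0/\m} \E^0(BG)/\m$ at the closed point. (2) Show $C_t^0$ is connected: $\LE^0$ is connected because $\E^0$ is a complete local (Noetherian, hence connected) ring and localization at $K(t)$ preserves this (or cite \cite{tgcm}); then $C_t' = \colim_k \LE^0 \otimes_{\E^0} \E^0(B(\Zp{k})^{n-t})$ is a colimit of finite free $\LE^0$-algebras each of which is connected — being of the form $\LE^0$ adjoined coordinates of torsion points, which are local — and a filtered colimit of connected rings along local maps is connected; finally $C_t = S^{-1}C_t'$ is a localization of a connected ring hence connected. (3) Promote connectedness of $C_t^0$ to connectedness of $C_t^0(BG)$: suppose $e \in C_t^0(BG)$ is a nontrivial idempotent; apply the augmentation $\epsilon: C_t^0(BG) \to C_t^0$ to get an idempotent $\epsilon(e) \in C_t^0$, which by step (2) is $0$ or $1$; then use that the inclusion $C_t^0 \hookrightarrow C_t^0(BG)$ is a split injection of rings together with the fact that $C_t^0(BG)$ is a finite free (in particular faithfully flat) $C_t^0$-algebra whose fibers over each point of $\Spec C_t^0$ are connected rings — this last point, that $\kappa(\mathfrak p) \otimes_{C_t^0} C_t^0(BG)$ is connected for every prime $\mathfrak p$, is what rules out $e$ splitting "along the fibers", and it follows because such a fiber is a quotient/base change of $\E^0(BG)$ which, being a power of the residue field (HKR/Ravenel–Wilson style: $K(n)^0(BG)$ and its deformations have connected spectrum) — or more simply because $\Spec \E^0(BG) \to \Spec \E^0$ has geometrically connected fibers by Strickland's structure results.

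The main obstacle I expect is step (3): carefully justifying that $C_t^0(BG)$ has no idempotents beyond those pulled back from $C_t^0$. The subtlety is that $C_t^0(BG)$ need not be local, only finite free over the connected ring $C_t^0$, so I need the fiberwise connectedness of $\Spec C_t^0(BG) \to \Spec C_t^0$. I would get this from the special fiber: $C_t^0 \to C_t^0(BG)$ becomes, modulo the maximal ideal of the local ring $\LE^0$ (pulled into $C_t^0$), a base change of $K(t)^0(BG)$-type rings, and for any finite group $G$ the ring $\E^0(BG)$ has the property that $\E^0(BG) \otimes_{\E^0} \kappa$ is a connected (in fact Artinian local after faithfully flat descent of idempotents) $\kappa$-algebra — this is essentially the statement that $BG$ is connected, i.e. that transferring along $e \subseteq G$ and the augmentation pin down the idempotents, since any idempotent in $\E^0(BG)$ restricts compatibly and the restriction to the trivial group detects it because $\E^*(BG)$ is generated over the image of $\E^*(Be)$-style transfers — alternatively I can simply invoke that $\Spf \E^0(BG)$ is connected (it is the "formal group cohomology of $BG$", pro-(finite connected)). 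Once fiberwise connectedness is in hand, an idempotent $e$ of $C_t^0(BG)$ gives a locally constant function on the connected space $\Spec C_t^0$ (its rank on fibers), hence constant, hence $e$ is $0$ or $1$ after checking it agrees with $\epsilon(e)$ on each fiber.
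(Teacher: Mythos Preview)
Your argument has two genuine gaps. First, the identification $C_t^0(BG) = C_t \otimes_{\E^0} \E^0(BG)$ is false: by definition $C_t^0(BG) = C_t \otimes_{\LE^0} \LE^0(BG)$, and these do not agree. Already for $G = \Z/p$ the $\E^0$-rank of $\E^0(B\Z/p)$ is $p^n$ while the $\LE^0$-rank of $\LE^0(B\Z/p)$ is $p^t$, since passing to $\LE^0$ makes $u_t$ a unit and drops the Weierstrass degree of $[p](x)$. (In fact $C_t \otimes_{\E^0} \E^0(BG)$ is exactly the product appearing as the \emph{target} of the character map, which is visibly disconnected.) Second, and more seriously, the claim that every fibre $\kappa(\mathfrak p) \otimes_{C_t} C_t^0(BG)$ is connected is false. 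Take $G = \Z/p$, $t \ge 1$, and any prime $\mathfrak p$ of $C_t$ with $p \notin \mathfrak p$. Writing $\LE^0(B\Z/p) = \LE^0[x]/f(x)$ with $f$ the Weierstrass polynomial of $[p](x)$, one has $f(x) = x\,g(x)$ with $g(0) = p$; over $\kappa(\mathfrak p)$ the constant term of $g$ is a unit, so $\bar f$ splits as $x$ times a polynomial coprime to $x$, and $\kappa(\mathfrak p)[x]/\bar f(x)$ is a nontrivial product. Thus the fibres over the ``height $<t$'' locus of $\Spec C_t$ are disconnected, and your all-fibres argument cannot close.

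What is true, and what your write-up nearly stumbles onto when you mention the ``special fibre'', is that connectedness at a \emph{single} prime suffices once you know $C_t$ is connected and $C_t^0(BG)$ is finite free over $C_t$: a nontrivial idempotent would split $C_t^0(BG)$ into projectives of constant positive rank, hence split every fibre. The paper chooses exactly the right prime: it localizes $C_t$ at $I_t$ and completes to obtain a complete local ring $K$, observes that $X \mapsto K \otimes_{C_t} C_t^0(X)$ is again a Borel-equivariant theory over a complete local coefficient ring, and then invokes the standard argument (as for $\E^0(BG)$) to see that $K \otimes_{C_t} C_t^0(BG)$ is itself complete local with maximal ideal $I_t + I_{\text{aug}}$. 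Flatness of $K$ over $C_t$ then transports any product decomposition upward to a contradiction. So the step you are missing --- producing one fibre (or completion) where connectedness is visible --- is precisely the content of the paper's proof, and it is not a consequence of the heuristics you offer about $\Spf \E^0(BG)$ or $K(n)^0(BG)$.
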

\begin{proof}
Let $(C_{t})_{I_t}$ be the localization of $C_{t}$ at the prime ideal $I_t$. Let $K$ be the completion of $(C_{t})_{I_t}$ at the ideal $I_t$. The ring $K$ is a flat $C_{t}$-algebra because completions and localizations are flat, it is also complete local. Thus $K$ can be used to construct a new Borel-equivariant cohomology theory on finite $G$-spaces
\[
X \mapsto K \otimes_{C_t} C_{t}^{0}(EG \times_G X).
\]
The proof that $\E^0(BG)$ is complete local (eg. \cite{marshthesis}, Lemma 4.58 and Proposition 4.60) implies that $K \otimes_{C_t} C_{t}^{0}(BG)$ is complete local with respect to the ideal $I_t + I_{\text{aug}}$, where $I_{\text{aug}}$ is defined as in Proposition \ref{augmentation}. Now if $C_{t}^{0}(BG) \cong R_1 \times R_2$ for non-zero rings $R_1$ and $R_2$ then there is a split short exact sequence of $C_t$-modules (because $R_1$ and $R_2$ are necessarily $C_t$-algebras)
\[
0 \lra{} R_1 \lra{} R_1 \times R_2 \lra{} R_2 \lra{} 0.
\]
Tensoring up to $K$ preserves this sequence. However, $K \otimes_{C_t} C_{t}^{0}(BG)$ is connected.
\end{proof}

\begin{cor}
Let $H \subseteq G$ with $|G/H|$ divisible by $p$. Let $I_{tr} \subseteq C_{t}^{0}(BG)$ be the ideal generated by the image of the transfer from $H$ to $G$, then $C_{t}^{0}(BG)/I_{tr}$ is connected.
\end{cor}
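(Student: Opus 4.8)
The plan is to deduce the corollary from the previous lemma, the only extra ingredient being Proposition \ref{augmentation}. The idea is that, after the base change $C_t \to K$ used in the proof of that lemma, the ideal $I_{tr}$ is carried into the maximal ideal, so that $C_t^0(BG)/I_{tr}$ becomes, after base change, a quotient of a \emph{local} ring, hence connected; the connectivity of $C_t^0(BG)/I_{tr}$ itself then follows by the same descent argument used in the lemma.

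Concretely, I would argue as follows. Since $t>0$ and $p \mid |G/H|$, Proposition \ref{augmentation} gives $I_{tr} \subseteq (p) + I_{\text{aug}}$; in particular $I_{tr}$ is a proper ideal, so $C_t^0(BG)/I_{tr} \neq 0$. Keep the notation of the previous lemma: let $K$ be the completion of $(C_t)_{I_t}$ along $I_t$, a flat $C_t$-algebra, and recall that $K \otimes_{C_t} C_t^0(BG)$ is a complete local ring with maximal ideal $I_t + I_{\text{aug}}$. Because $p \in I_t$ we have $(p) + I_{\text{aug}} \subseteq I_t + I_{\text{aug}}$, so the image of $I_{tr}$ in $K \otimes_{C_t} C_t^0(BG)$ lies in the maximal ideal. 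Hence
\[
K \otimes_{C_t} \bigl( C_t^0(BG)/I_{tr} \bigr) \cong \bigl( K \otimes_{C_t} C_t^0(BG) \bigr) / \overline{I_{tr}}
\]
is a nonzero quotient of a local ring by a proper ideal, hence local, hence connected. Now if $C_t^0(BG)/I_{tr} \cong R_1 \times R_2$ with $R_1, R_2$ nonzero, then applying the flat functor $K \otimes_{C_t} -$ to the split exact sequence $0 \to R_1 \to R_1 \times R_2 \to R_2 \to 0$ exhibits the connected ring $K \otimes_{C_t}(C_t^0(BG)/I_{tr})$ as a product $K \otimes_{C_t} R_1 \times K \otimes_{C_t} R_2$, which is impossible provided neither factor vanishes.

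The one point requiring care, and it is exactly the issue already present in the proof of the previous lemma, is the implication $R_i \neq 0 \Rightarrow K \otimes_{C_t} R_i \neq 0$: the ring $C_t$ is neither local nor $I_t$-adically separated, so a priori a nonzero factor could be annihilated by the base change. Each $R_i$ is a direct summand of $C_t^0(BG)/I_{tr}$, hence a finitely generated (indeed finitely presented) $C_t$-module, so $K \otimes_{C_t} R_i = 0$ would force $(R_i)_{I_t} = 0$ by Nakayama over the local ring $(C_t)_{I_t}$; thus it suffices to know that a nonzero summand of the finite $C_t$-algebra $C_t^0(BG)/I_{tr}$ cannot be supported entirely away from the prime $I_t$, which one settles exactly as in the previous lemma. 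Granting this, $C_t^0(BG)/I_{tr}$ has no nontrivial idempotents, which is the assertion.
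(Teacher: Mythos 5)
Your proof is correct and follows essentially the same route as the paper: Proposition \ref{augmentation} places $I_{tr}$ inside $(p)+I_{\text{aug}}$, base change to the complete local ring $K$ makes the quotient of $K\otimes_{C_t}C_{t}^{0}(BG)$ local and hence connected, and the split-exact-sequence argument from the preceding lemma rules out nontrivial idempotents. The one point you flag---that a nonzero direct factor cannot vanish after tensoring with $K$---is left implicit in the paper's own proofs of both the lemma and the corollary, so your treatment is, if anything, slightly more careful.
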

\begin{proof}
If $|G/H|$ is not divisible by $p$ then the transfer map is surjective. Note that $I_{tr} \subseteq (p)+I_{\text{aug}}$ by Proposition \ref{augmentation}. There is a map of cohomology theories
\[
C_{t}^{0}(EG \times_G X) \lra{} K \otimes_{C_{t}} C_{t}^{0}(EG \times_G X),
\]
where $K$ is the $C_t$-algebra defined in the previous lemma. As transfer maps commuted with maps of cohomology theories we have
\[
\xymatrix{C_{t}^{0}(BH) \ar[d]^{\Tr} \ar[r] & K \otimes_{C_t} C_{t}^{0}(BH) \ar[d]^{\Tr} \\ C_{t}^{0}(BG) \ar[r] & K \otimes_{C_t} C_{t}^{0}(BG).}
\]
This implies that 
\[
K \otimes_{C_{t}} (C_{t}^{0}(BG)/I_{tr}) \cong (K \otimes_{C_t} C_{t}^{0}(BG))/I_{tr},
\]
where the ideal $I_{tr}$ on the left is the one defined using the left arrow and the ideal on the right is defined using the right arrow. This ring is local (and thus connected). The argument from the previous lemma now implies the claim.
\end{proof}

Recall that the transchromatic generalized character map and Theorem \ref{mainthm} give an isomorphism
\[
C_t \otimes_{\E^0} \E^0(B \Sigma_{p^k})/I_{tr} \cong \Prod{[\al] \in \hom(\Z_{p}^{n-t}, \Sigma_{p^k})/\sim} C_{t}^{0}(BC(\im \al))/I_{tr}^{[\al]}
\]
in which the ideal $I_{tr}$ on the left is the ideal generated by the image of the transfer $\Sigma_{p^{k-1}}^{\times p} \subset \Sigma_{p^k}$ and the ideals called $I_{tr}^{[\al]}$ on the right are determined by Theorem \ref{mainthm}.

The following is our main combinatorial result.

\begin{lemma} \label{combinatorial}
The number of non-zero factors of 
\[
\Prod{[\al] \in \hom(\Z_{p}^{n-t}, \Sigma_{p^k})/\sim} C_{t}^{0}(BC(\im \al))/I_{tr}^{[\al]}
\]
are in bijective correspondence with the elements of 
\[
\sub_{\leq k}(\QZ^{n-t}).
\]
\end{lemma}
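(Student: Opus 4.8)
The plan is to separate the problem into (i) deciding which of the finitely many factors $C_t^0(BC(\im\al))/I_{tr}^{[\al]}$ are nonzero and (ii) matching those with $\sub_{\le k}(\QZ^{n-t})$ by an explicit bijection. Throughout I use the dictionary identifying a class $[\al]\in\hom(\Z_p^{n-t},\Sigma_{p^k})/\sim$ with the isomorphism class of the associated $\Z_p^{n-t}$-set $T_\al$ of cardinality $p^k$, and hence — by decomposing $T_\al$ into orbits and applying Pontryagin duality, which sends a transitive set $\Z_p^{n-t}/K$ to the finite subgroup $K^\perp\cong(\Z_p^{n-t}/K)^*\le\QZ^{n-t}$ of order $[\Z_p^{n-t}:K]$ — with a finite multiset of finite subgroups of $\QZ^{n-t}$ whose orders sum to $p^k$. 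Under this dictionary $\im\al\cong\Z_p^{n-t}/K_\al$, where $K_\al$ is the intersection of the point stabilizers, and $\im(\al^*)\le\QZ^{n-t}$ is the subgroup generated by the duals of the orbits of $T_\al$.

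The first step is to show that the factor at $[\al]$ is nonzero exactly when $T_\al\cong(\Z_p^{n-t}/K)^{\sqcup p^{k-m}}$, i.e. $T_\al$ is a disjoint union of $p^{k-m}$ copies of a single transitive set $\Z_p^{n-t}/K$ with $p^m=[\Z_p^{n-t}:K]$ the largest orbit size occurring; an elementary observation (a $\Z_p^{n-t}$-set of $p$-power size splits into $p$ equal sub-$\Z_p^{n-t}$-sets unless it is a single orbit, since sub-sets are unions of orbits and powers of $p$ can always be bin-packed) shows this is equivalent to the condition from the introduction, that $\al$ factor up to conjugacy as $\Z_p^{n-t}\to\Sigma_{p^m}\lra{\Delta}\Sigma_{p^m}^{\times p^{k-m}}\hookrightarrow\Sigma_{p^k}$ with $m$ minimal. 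To prove the characterization I apply Theorem \ref{mainthm} with $H=\Sigma_{p^{k-1}}^{\times p}$: then $I_{tr}^{[\al]}$ is generated by the sums $\sum_{[gH]}\Tr_{C_t}(\Phi_H^t[g^{-1}\al g](x))$ over $[gH]\in(G/H)^{\im\al}/C(\im\al)$, each summand a transfer along $gC_H(g^{-1}\im\al g)g^{-1}\subseteq C_G(\im\al)$ by Corollary \ref{maincor}. A short computation identifies $(G/H)^{\im\al}/C(\im\al)$ with the balanced $p$-colorings of the orbit multiset of $T_\al$, and the index of the corresponding inclusion with a product of multinomial coefficients $\prod_K\binom{a_K}{a_{K,1},\dots,a_{K,p}}$, where $a_K$ is the multiplicity of the orbit type $K$ and $a_{K,i}$ its part of color $i$. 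When $T_\al\cong(\Z_p^{n-t}/K)^{\sqcup p^{k-m}}$ there is exactly one coloring (and none when $m=k$, where the fixed-point set is empty, $I_{tr}^{[\al]}=0$, and the factor is the nonzero ring $C_t^0(BC(\im\al))$); a Kummer-type computation shows that index has $p$-adic valuation $1$, so by Proposition \ref{augmentation} the transfer lands in the proper ideal $(p)+I_{\text{aug}}$ and the factor survives. When $T_\al$ is neither a single orbit nor of that form, some balanced coloring has index prime to $p$, hence a split surjective transfer, and one concludes $I_{tr}^{[\al]}$ is the whole ring. I expect this last point to be the main obstacle: because $I_{tr}^{[\al]}$ is generated by a single sum ranging over all $[gH]$ rather than by the transfers individually, one must peel orbit types off one at a time — choosing $x$ so that the distinguished prime-to-$p$ summand dominates and the others are absorbed, using that transfers are $C_t^0(BG)$-module maps (Property 3) — to see that the sum realizes a unit.

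With the characterization established, the second step is routine. Send a class $[\al]$ with nonzero factor, so $T_\al\cong(\Z_p^{n-t}/K)^{\sqcup p^{k-m}}$ with $[\Z_p^{n-t}:K]=p^m$, to $L:=K^\perp=\im(\al^*)\le\QZ^{n-t}$; since $|L|=p^m\le p^k$ this lies in $\sub_{\le k}(\QZ^{n-t})$. Conversely a subgroup $L\le\QZ^{n-t}$ of order $p^m\le p^k$ gives $K:=L^\perp\le\Z_p^{n-t}$ of index $p^m$ and hence the class of $(\Z_p^{n-t}/K)^{\sqcup p^{k-m}}$, which has the required form (and automatically $p^m$ is its largest orbit size). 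These maps are mutually inverse: $K^{\perp\perp}=K$ and $L^{\perp\perp}=L$ by Pontryagin duality for the finite group $\Z_p^{n-t}/K$, and in $(\Z_p^{n-t}/K)^{\sqcup p^{k-m}}$ the subgroup $K$ is recovered as the common point stabilizer, so $[\al]$ is recovered from $L$. Since Pontryagin duality identifies the open (hence $p$-power-index) subgroups of $\Z_p^{n-t}$ with the finite (hence $p$-power-order) subgroups of $\QZ^{n-t}$, and $\sub_{\le k}(\QZ^{n-t})$ is precisely the set of the latter of order at most $p^k$, this is the desired bijection.
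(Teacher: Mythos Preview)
Your overall strategy is correct and mirrors the paper's: characterize the $[\al]$ with $I_{tr}^{[\al]}\neq C_t^0(BC(\im\al))$ as exactly those factoring through the diagonal $\Delta$, then match these with $\sub_{\le k}(\QZ^{n-t})$ via Pontryagin duality. The second step is handled identically. But in the first step you have misidentified where the difficulty lies, and this leaves a real gap.

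The ``main obstacle'' you flag---that $I_{tr}^{[\al]}$ is generated by \emph{sums} $\sum_{[gH]}\Tr_{C_t}(\Phi_H^t[g^{-1}\al g](x))$ rather than by the individual transfers---is not an obstacle at all. By Proposition~\ref{proponeandahalf} the $[gH]$'s are in bijection with the $H$-conjugacy classes $[\beta]\in i_*^{-1}([\al])$, which index \emph{distinct} factors of $\prod_{[\beta]}C_t^0(BC_H(\im\beta))$. Since $C_t\otimes\Phi_H^t$ is an isomorphism onto this product and $I_{tr}^{[\al]}$ is a $C_t$-ideal, you may take $C_t$-linear combinations of the generators and thereby arrange $\Phi_H^t[\beta](x)$ arbitrarily on one factor and zero on the rest. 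Hence $I_{tr}^{[\al]}$ \emph{is} generated by the individual transfers, and your ``peeling'' argument is unnecessary. The paper dispatches this in one sentence: $I_{tr}^{[\al]}$ is the whole ring iff one of the transfer maps is surjective, and otherwise (by Proposition~\ref{augmentation}) all of them land in $(p)+I_{\text{aug}}$.

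The step you pass over---``some balanced coloring has index prime to $p$''---is where the real content is, and you have not proved it. It is plausible and checks out in small cases, but a clean argument with $H=\Sigma_{p^{k-1}}^{\times p}$ fixed is not obvious. The paper sidesteps this entirely: invoking Strickland's Lemma~8.11 (which says $I_{tr}\subset\E^0(B\Sigma_{p^k})$ is generated by the transfer from \emph{any} proper partition subgroup), it replaces $\Sigma_{p^{k-1}}^{\times p}$ by a partition subgroup $\Sigma_a\times\Sigma_b$ tailored to $\al$. Specifically, letting $X_1,\dots,X_j$ be the orbits of the maximal size $p^m$ that are isomorphic to a fixed one, any automorphism of $T_\al$ preserves the decomposition $\big(\coprod_{i\le j}X_i\big)\amalg\big(\coprod_{i>j}X_i\big)$; hence $C_{\Sigma_{p^k}}(\im\al)\subseteq\Sigma_{jp^m}\times\Sigma_{p^k-jp^m}$, so the centralizer inclusion is an \emph{equality} and the corresponding transfer is the identity. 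This yields index $1$ directly, with no multinomial or Kummer analysis needed.
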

\begin{proof}
This is a question about when the transfer map is surjective. It is true that some of the ideals $I_{tr}$ in the statement of the lemma are generated by the image of two or more transfer maps. However, since these ideals are contained in $(p)+I_{\text{aug}} \subset C_{t}^{0}(BG)$ (unless the ideal is the whole ring), $I_{tr}$ is the whole ring if and only if one of the transfer maps is surjective. 

Let $h = n-t$. It is well-known (see Section 3 of \cite{etheorysym}, for instance) that the number of conjugacy classes of maps
\[
\Z_{p}^{h} \lra{} \Sigma_{p^k}
\]
that do not lift (up to conjugacy) to
\[
\Sigma_{p^{k-1}}^{\times p} \subseteq \Sigma_{p^k}
\]
is in bijective correspondence with isomorphism classes of transitive $\Z_{p}^{h}$-sets and this is in bijective correspondence with 
\[
\sub_{k}(\QZ^{h}).
\]
It is clear that all maps with this property contribute factors to the product in question: the transfer map is the zero map. 

Now fix a map $\al: \Z_{p}^{h} \lra{} \Sigma_{p^k}$ that does factor (up to conjugacy) through
\[
\xymatrix{& \Sigma_{p^{k-1}}^{\times p} \ar[d]^i \\ \Z_{p}^{h} \ar[r]^{\al} \ar@{-->}[ur] & \Sigma_{p^k}}
\]
and let $\gamma_1, \ldots, \gamma_l$ represent elements of $i_{*}^{-1}([\al])$.

Let $m < k$ be the smallest integer such that a map $\al:\Z_{p}^{h} \lra{} \Sigma_{p^k}$ factors up to conjugacy through
\[
\Sigma_{p^m}^{\times p^{k-m}} \subseteq \Sigma_{p^k}.
\]
Since $\gamma_1, \ldots, \gamma_l$ all represent isomorphic $\Z_{p}^{h}$-sets (because they are all conjugate in $\Sigma_{p^k}$) the integer $m$ is also the smallest integer such that, for each $i$, there is a factorization
\[
\xymatrix{& \Sigma_{p^{m}}^{\times p^{k-m}} \ar[d] \\ \Z_{p}^{h} \ar[r]^{\gamma_i} \ar@{-->}[ur] & \Sigma_{p^{k-1}}^{\times p}}
\]
up to conjugacy in $\Sigma_{p^{k-1}}^{\times p}$.

Now assume that $\al$ does not factor through the diagonal map
\[
\Sigma_{p^m} \lra{\triangle} \Sigma_{p^m}^{\times p^{k-m}}.
\]
We show that, in this case, the transfer from $C_{\Sigma_{p^{k-1}}^{\times p}}(\im \al) \lra{} C_{\Sigma_{p^k}}(\im \al)$ is the identity map. 

Let $X$ be the $\Z_{p}^{h}$-set associated to $\al$. The factorization determines $p^{k-m}$ $\Z_{p}^{h}$-sets of order $p^m$: $X_1, \ldots, X_{p^{k-m}}$ such that $X \cong X_1 \coprod \ldots \coprod X_{p^{k-m}}$. 

The fact that $m$ is the smallest integer with this property implies that at least one of the $\Z_{p}^{h}$-sets of order $p^m$ is transitive. Without loss of generality we may assume that $X_1$ is transitive and that $X_1, X_2, \ldots X_j$ are isomorphic $\Z_{p}^{h}$-sets and $X_{j+1}, \ldots, X_{p^{k-m}}$ are all non-isomorphic to $X_1$. 

Note that $j$ may be equal to $1$ and that we know there are non-isomorphic $\Z_{p}^{h}$-sets because the map $\al$ does not factor through the diagonal.

By \cite{etheorysym} Lemma 8.11 it suffices to show that the transfer from 
\[
C_{\Sigma_{p^{jm}} \times \Sigma_{p^{k-jm}}}(\im \al) \subset C_{\Sigma_{p^k}}(\im \al)
\]
is the identity.

Now consider an element $\sigma \in C_{\Sigma_{p^k}}(\im \al)$, this determines an automorphism of $X$. Since $X_1, \ldots, X_j$ are transitive and not isomorphic to the other $\Z_{p}^{h}$-sets, $\sigma$ can not map any of $X_1, \ldots, X_j$ to $X_k$ with $k > j$. Thus $\sigma$ must be the product of two disjoint permutations. In other words $\sigma \in \Sigma_{p^{jm}} \times \Sigma_{p^{k-jm}}$ and this implies that the transfer map described above is induced by the identity map on groups.

Next assume that $\al$ factors (up to conjugacy) through the $\Delta$:
\[
\xymatrix{& \Sigma_{p^m} \ar[d]^{\triangle} \\ & \Sigma_{p^m}^{\times p^{k-m}} \ar[d] \\ \Z_{p}^{h} \ar[r]^{\al} \ar@{-->}[uur] & \Sigma_{p^k}.}
\]
This implies that each of the $\gamma_i$'s will factor through the diagonal (up to conjugacy in $\Sigma_{p^{k-1}}^{\times p}$). We also know that $\al$ does not factor through the inclusion $\Sigma_{p^{m-1}}^{\times p} \subset \Sigma_{p^m}$. We will conclude that the transfer map induced by the inclusion
\[
C_{\Sigma_{p^{k-1}}^{\times p}}(\im \al) \lra{} C_{\Sigma_{p^k}}(\im \al)
\]
is not the identity map.

The assumptions imply that the dotted arrow determines a transitive $\Z_{p}^{h}$-set of order $p^m$ and that $X$ is a disjoint union of $p^{k-m}$ copies of this set. Now any permutation of these sets is in $C_{\Sigma_{p^k}}(\im \al)$ and many of these are elements of prime power order that are not in $\Sigma_{p^{k-1}}^{\times p}$. 

Now any map $\al:\Z_{p}^{h} \lra{} \Sigma_{p^k}$ factors up to conjugacy through one of the two cases discussed above. In the first case, when it does not factor through the diagonal, it does not contribute a factor to the product in question (because $I_{tr}^{[\al]} = C_{t}^{0}(BC(\im \al))$). In the second case, when it does factor through the diagonal, then it does contribute a factor. In this case the number of maps $\al$ (up to conjugacy) with a particular $m$ are in bijective correspondence with the number of isomorphism classes of $\Z_{p}^{h}$-sets of order $p^m$. This is the cardinality of $\sub_{m}(\QZ^{h})$. Putting these together for varying $m$ gives the total number of nontrivial factors in the product: the cardinality of 
\[
\sub_{\leq k}(\QZ^{n-t}).
\]
\end{proof}

The isomorphism induced by the character map
\[
\Coprod{[\al] \in \hom(\Z_{p}^{n-t}, \Sigma_{p^k})/\sim} \Spec(C_{t}^{0}(BC(\im \al))/I_{tr}^{[\al]}) \cong \sub_{k}(\G_{C_t} \oplus \QZ^{n-t})
\]
along with the lemmas and examples above seem to imply that the character map modulo transfers witnesses the decomposition of $\sub_{k}(\G_{C_t} \oplus \QZ^{n-t})$ as the fibers of the map to $\sub_{\leq k}(\QZ^{n-t})$. The first lemma implies that the scheme can be decomposed no further. Now we show that this is true:

\begin{thm} \label{thm2}
The isomorphism fits into a commutative triangle
\[
\xymatrix{\Coprod{[\al] \in \hom(\Z_{p}^{n-t}, \Sigma_{p^k})/\sim}\Spec(C_{t}^{0}(BC(\im \al))/I_{tr}^{[\al]}) \ar[r]^-{\cong} \ar[d] & \sub_{k}(\G_{C_t} \oplus \QZ^{n-t}) \ar[dl] \\ \sub_{\leq k}(\QZ^{n-t}), &}
\]
where the left map takes the component corresponding to $[\al]$ to the image of 
\[
\al^*: (\im \al)^* \lra{} \QZ^{n-t}
\]
and the right map is induced by the projection
\[
\G_{C_t} \oplus \QZ^{n-t} \lra{} \QZ^{n-t}.
\]
\end{thm}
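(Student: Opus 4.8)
The plan is to reduce the commutativity of the triangle to a statement about the universal subgroup scheme. Since $\sub_{k}(\G_{C_t}\oplus\QZ^{n-t}) = C_t\otimes\sub_k(\G_{\E})$ carries a universal subgroup $D$ of order $p^k$, and the right-hand map of the triangle is by definition the one classifying the projection $\pi(D)$ of $D$ to $\QZ^{n-t}$ — a locally constant subgroup of $\QZ^{n-t}$, as it is a subgroup scheme of a constant étale group — it suffices to show: for every $[\al]$ with $W_{[\al]} := \Spec(C_{t}^{0}(BC(\im\al))/I_{tr}^{[\al]})$ nonzero, the pullback $D_{[\al]}$ of $D$ along the character-map identification of $W_{[\al]}$ with a clopen piece of $\sub_{k}(\G_{C_t}\oplus\QZ^{n-t})$ satisfies $\pi(D_{[\al]}) = L\times W_{[\al]}$, where $L = \im(\al^{*})\subseteq\QZ^{n-t}$. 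By the connectedness of $C_{t}^{0}(BG)/I_{tr}$ established above, $W_{[\al]}$ is connected, so $\pi(D_{[\al]})$ genuinely is a constant subgroup; the zero factors contribute nothing.

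Next I would put $\al$ in normal form using Lemma \ref{combinatorial}: when $W_{[\al]}\neq\emptyset$, the $\Z_{p}^{n-t}$-set $X_\al$ underlying $\al$ must be a disjoint union of $p^{k-m}$ copies of a single transitive set $Y\cong\Z_{p}^{n-t}/S$ of order $p^m$; then $\im\al\cong\Z_{p}^{n-t}/S$, the Pontryagin dual has image $L = S^{\perp}$ (characters of $\Z_{p}^{n-t}$ trivial on $S$), and $C(\im\al)\cong(\Z_{p}^{n-t}/S)\wr\Sigma_{p^{k-m}}$. Moreover, in this nonzero case each transfer generating $I_{tr}^{[\al]}$ has index divisible by $p$ (otherwise it is split surjective and $I_{tr}^{[\al]}$ is the unit ideal), so $I_{tr}^{[\al]}\subseteq(p)+I_{\text{aug}}$ by Proposition \ref{augmentation}; restricting along any subgroup $V\subseteq C(\im\al)$ therefore lands in $(p)+I_{\text{aug}}$ for $V$, a proper ideal, so $\Spec(C_{t}^{0}(BV)/(\text{restricted ideal}))\to W_{[\al]}$ has nonempty source. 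Since $C_{t}^{0}(BC(\im\al))$ injects into $\prod_{V\ \text{abelian}}C_{t}^{0}(BV)$, it is enough to compute $\pi(D_{[\al]})$ after restriction along one convenient abelian subgroup, e.g. $V := (\Z_{p}^{n-t}/S)^{p^{k-m}}\subseteq C(\im\al)$.

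For the computation I would use the factorization of the character map. Evaluated at $X=*$ and precomposed with restriction to $V$, $\Phi_{\Sigma_{p^k}}^{t}[\al]$ is the composite $\E^{0}(B\Sigma_{p^k})\overset{T^{*}}{\to}\E^{0}(B\Lk\times BV)\to C_{t}^{0}(BV)$, where $T\colon\Lk\times V\to\Sigma_{p^k}$ is the action on $X_\al$, namely $(l,v)\mapsto\al(l)v$, and the second map is the tensor of the defining map $\E^{0}(B\Lk)\to C_t$ — which classifies the canonical inclusion $\Lk\hookrightarrow(\Zp{k})^{n-t}\subseteq\G_{\E}[p^k](C_t)=\G_{C_t}[p^k]\oplus(\Zp{k})^{n-t}$, landing in the étale summand — with the structure map $\E^{0}(BV)\to\LE^{0}(BV)\to C_{t}^{0}(BV)$ into the height-$t$ theory $C_{t}^{0}(BV)=\Gamma(\Hom(V,\G_{C_t}))$, which has no étale part. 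Strickland's description of $\sub_k(\G_{\E})$ identifies the universal subgroup with the divisor of roots of the tautological rank-$p^k$ bundle over $B\Sigma_{p^k}$; pulling this back along $T$ gives the divisor of the permutation representation $\mathbb{C}[X_\al]$ of $\Lk\times V$, whose roots, after decomposing $\mathbb{C}[X_\al]$ into characters over the orbits of $\Lk\times V$, are the Euler classes $e(\mathbb{C}_{\chi})=e(\mathbb{C}_{\chi_1})+_{\G_{\E}}e(\mathbb{C}_{\chi_2})$ for $\chi=(\chi_1,\chi_2)\in\Lk^{*}\times V^{*}$ pairing trivially against a point-stabilizer. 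Applying the second map, the $\QZ^{n-t}$-component of $e(\mathbb{C}_{\chi})$ is exactly the image of $\chi_1$ under the natural inclusion $\Lk^{*}\hookrightarrow\QZ^{n-t}$ — the $\chi_2$-term contributes nothing, since it lands in $\G_{C_t}$ — so $\pi(D_{[\al]}|_{V})$ is the subgroup of $\QZ^{n-t}$ generated by these $\chi_1$'s.

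Finally I would identify that subgroup. For each orbit $O$ of $\Lk\times V$ on $X_\al$, the $\chi_1$'s that occur are exactly the annihilator in $\Lk^{*}$ of $N_O := \mathrm{Stab}_{\Lk}(x_0)$ for $x_0\in O$; since every $\Lk$-orbit in $X_\al$ is isomorphic to $\Z_{p}^{n-t}/S$, $N_O$ equals the image $S'$ of $S$ in $\Lk$, independently of $O$ and of $V$. Pulling $S'^{\perp}\subseteq\Lk^{*}$ back along $\Lk^{*}\hookrightarrow\QZ^{n-t}$ yields exactly $S^{\perp}=L$, which is already a subgroup; hence $\pi(D_{[\al]}|_{V})=L\times\Spec(C_{t}^{0}(BV)/(\text{restricted ideal}))$, and by the nonemptiness and connectedness noted above this forces $\pi(D_{[\al]})=L\times W_{[\al]}$, proving the theorem. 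The hard part is the third step: combining Strickland's computation of the universal divisor over $B\Sigma_{p^k}$ and its restriction along a permutation representation with careful bookkeeping of the two genuinely different base-change behaviors of the character map — the $B\Lk$-factor remembers the étale summand $\QZ^{n-t}$ of $C_t\otimes\G_{\E}$ through the canonical point, while the $BV$-factor only sees the height-$t$ formal group $\G_{C_t}$ — because it is precisely this asymmetry that makes the projection come out to $L$ rather than to all of $\Lk$ or to something smaller.
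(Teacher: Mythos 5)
Your proposal is correct in outline but takes a genuinely different route from the paper's proof. The paper never computes the universal divisor directly: it reduces to the transitive abelian subgroups $A \subseteq \Sigma_{p^k}$ via Strickland's diagram from the proof of Proposition 9.1 of \cite{etheorysym}, in which $\Coprod{A \in \bar{A}}\level(A^*,\G_{\E}) \to \sub_k(\G_{\E})$ is injective on global sections, and then reads everything off from the tautological identifications $\Spec(\E^0(BA)) \cong \hom(A^*,\G_{\E})$ and $\Spec(\E^0(BA)/I_{tr}) \cong \level(A^*,\G_{\E})$ together with the definition of the character map for abelian groups. You instead use connectedness of each nonzero factor to reduce the commutativity to evaluating the projection of the universal subgroup at a single point, and you manufacture that point by restricting along an abelian subgroup $V$ of the centralizer, computing the divisor by decomposing $\mathbb{C}[X_\al]$ into characters of $\Lk \times V$. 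Your key mechanism --- that $e(\chi_1)$ for $\chi_1 \in \Lk^*$ lands in the \'etale summand $\QZ^{n-t}$ via the defining map $\E^0(B\Lk) \to C_t$, while $e(\chi_2)$ for $\chi_2 \in V^*$ lands in $\G_{C_t}$ because it factors through $\LE^0(BV)$ --- is exactly right and is the same asymmetry that drives the paper's argument, just accessed through $V \subseteq C(\im \al)$ rather than through $A \subseteq \Sigma_{p^k}$. The paper's route has the advantage that all divisor-theoretic input is quoted from \cite{subgroups} and \cite{etheorysym} verbatim; yours avoids the level-structure machinery and makes the identification of the subgroup $L = S^{\perp}$ of $\QZ^{n-t}$ completely explicit via the duality $p_1(H^{\perp}) = (H \cap \Lk)^{\perp}$ applied to orbit stabilizers.

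Two points need repair. First, the claim that $C_{t}^{0}(BC(\im\al))$ injects into $\prod_{V} C_{t}^{0}(BV)$ over abelian subgroups is not something you can quote (the integral statement is delicate even for $\E$ itself), but you do not need it: connectedness of the factor plus nonemptiness of your chosen restriction already pins down the constant value of the projection, so simply delete that sentence. Second, your nonemptiness argument for $\Spec(C_{t}^{0}(BV)/(\text{restricted ideal}))$ rests on $(p)+I_{\text{aug}}$ being a proper ideal of $C_{t}^{0}(BV)$, i.e., on $C_t/p \neq 0$; this is fine for $t>0$ (matching the hypothesis of Proposition \ref{augmentation}) but fails for $t=0$, where $p$ is invertible in $C_0$, so that degenerate case would need a separate (easy, Hopkins--Kuhn--Ravenel-style) witness point or should be excluded as the paper implicitly does in this section.
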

\begin{proof}
Note that the image of the Pontryagin dual in $\QZ^{n-t}$ is invariant under conjugation of the map $\al$. The right vertical map is induced by projection onto the $\QZ^{n-t}$ factor.

Let $A$ be an abelian group of order $p^k$. There is a canonical isomorphism
\[
\hom(A^*, \G_{\E}) \cong \Spec(\E^0(BA)).
\]
Pulling this isomorphism back to the ring $C_t$ and applying the transchromatric generalized character map gives the isomorphism
\[
\hom(A^*, \G_{C_t}\oplus \QZ^{n-t}) \cong \Coprod{\hom(\Z_{p}^{n-t},A)}\Spec(C_{t}^{0}(BA)).
\]
The definition of the character map implies that this fits into the following commutative diagram:
\[
\xymatrix{\hom(A^*, \G_{C_t}\oplus \QZ^{n-t}) \ar[r]^-{\cong} \ar[d] &  \Coprod{\hom(\Z_{p}^{n-t},A)}\Spec(C_{t}^{0}(BA)) \ar[d] \\
			\hom(A^*,\QZ^{n-t}) \ar[r]^{(-)^*} \ar[d] & \hom(\Z_{p}^{n-t}, A) \ar[ld] \\
			\sub_{\leq k}(\QZ^{n-t}). &}
\]
There is also the commutative diagram of schemes
\[
\xymatrix{\level(A^*, \G_{\E}) \ar[d] \ar[r]^-{\cong} & \Spec(\E^0(BA)/I_{tr}) \ar[d] \\
			\hom(A^*,\G_{\E}) \ar[r]^-{\cong} & \Spec(\E^0(BA)).}
\]
Pulling the top arrow back to the ring $C_t$ and then applying the character map gives the commutative diagram
\[
\xymatrix{\level(A^*, \G_{C_t} \oplus \QZ^{n-t}) \ar[r]^-{\cong} \ar[d] & \Coprod{\al \in \hom(\Z_{p}^{n-t},A)} \Spec(C_{t}^{0}(BA)/I_{tr}^{\al}) \ar[ld] \\
			\sub_{\leq k}(\QZ^{n-t}). &}
\]
It should be noted that a level structure for the \pdiv group $\G_{C_t} \oplus \QZ^{n-t}$ is a map $A^* \lra{} \G_{C_t} \oplus \QZ^{n-t}$ that is either a level structure for $\G_{C_t}$ or injective into $\QZ^{n-t}$. It follows immediately from Theorem \ref{mainthm} that this is what the top right corner of the diagram represents.

In \cite{subgroups}, Theorem 7.4, Strickland defines a map
\[
\level(A^*, \G_{\E}) \lra{} \sub_{k}(\G_{\E}).
\]
The map sends a level structure to its ``image'', which is a subgroup scheme of $\G_{\E}$. When the target scheme is constant the ``image'' divisor of the level structure is the genuine image of the map. Thus after pulling back to $C_t$ we have the following commutative diagram:
\[
\xymatrix{\level(A^*, \G_{C_t} \oplus \QZ^{n-t}) \ar[r] \ar[d] & \sub_{k}(\G_{C_t} \oplus \QZ^{n-t}) \ar[ld] \\
			\sub_{\leq k}(\QZ^{n-t}). &}
\]
In the proof of Proposition 9.1 of \cite{etheorysym}, Strickland proves the following result: Let $\bar{A}$ be the set of transitive abelian subgroups of $\Sigma_{p^k}$ (note that each of these has order $p^k$). The following diagram commutes:
\[
\xymatrix{\Coprod{A \in \bar A}\level(A^*,\G_{\E}) \ar[r]^-{\cong} \ar[d] & \Coprod{A \in \bar A} \Spec(\E^0(BA)/I_{tr}) \ar[d] \\
			\sub_{k}(\G_{\E}) \ar[r]^-{\cong} & \Spec(\E^0(B\Sigma_{p^k})/I_{tr}),}
\]
where the right map is induced by the inclusion $A \subseteq \Sigma_{p^k}$ and the global sections of each of the vertical maps are injective maps of rings. Note that this property is preserved after pull-back to $C_t$ because $C_t$ is a flat $\E^0$-algebra.

We have shown that, after pulling back to $C_t$, the left hand map and the top map both commute with the natural maps to $\sub_{\leq k}(\QZ^{n-t})$. Because the right hand map is induced (on each component) by an inclusion of groups, the subgroups of $\QZ^{n-t}$ defined by considering the image of the Pontryagin dual of the map from $\Z_{p}^{n-t} \lra{} \im \al \subseteq A$ or $\Z_{p}^{n-t} \lra{} \im \al \subseteq A \subseteq \Sigma_{p^k}$ are the same. This implies that the right hand arrow also sits inside a commutative triangle to $\sub_{\leq k}(\QZ^{n-t})$. 

Finally, since the global sections of the vertical maps are injective we can pick an element in the global sections of $\sub_{\leq k}(\QZ^{n-t})$, map it into the global sections of $\sub_k (\G_{C_t} \oplus \QZ^{n-t})$ and then map it around the square. The result follows. 
\end{proof}

Fix a map $\al: \Z_{p}^{n-t} \lra{} \Sigma_{p^k}$ that factors through $\Delta$ (up to conjugacy) as in the proof of Lemma \ref{combinatorial} and let $L \subseteq \QZ^{n-t}$ be the image of the Pontryagin dual $\al^*: \im \al \lra{} \QZ^{n-t}$. Let $f:\sub_{k}(\G_{C_t} \oplus \QZ^{n-t}) \lra{} \sub_{\leq k}(\QZ^{n-t})$ and let $f^{-1}(L)$ be the pullback 
\[
\xymatrix{f^{-1}(L) \ar[r] \ar[d] & \sub_{k}(\G_{C_t} \oplus \QZ^{n-t}) \ar[d]^{f} \\
			\ast \ar[r]^-{L} & \sub_{\leq k}(\QZ^{n-t}).}
\]
We have the following corollary of Theorem \ref{thm2} above that gives an algebro-geometric description of the $C_t$ cohomology of groups that arise as centralizers of tuples of commuting elements in symmetric groups (modulo a transfer ideal):
\begin{cor}
For $\al:\Z_{p}^{n-t} \lra{} \Sigma_{p^k}$ factoring (up to conjugacy) through $\Delta$, there is an isomorphism 
\[
\Spec(C_{t}^{0}(BC(\im \al))/I_{tr}^{[\al]}) \cong f^{-1}(L),
\]
where the ideal $I_{tr}^{[\al]}$ is the ideal coming from the application of Theorem \ref{mainthm} to the inclusion $\Sigma_{p^{k-1}}^{\times p} \subset \Sigma_{p^k}$.
\end{cor}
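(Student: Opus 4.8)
\emph{Proof plan.} The plan is to read this off directly from Theorem \ref{thm2} by restricting the commutative triangle there to the fiber over the point $L$. Since $\QZ^{n-t}$ is a constant group scheme over $\Spec(C_t)$, the scheme $\sub_{\leq k}(\QZ^{n-t})$ is a finite disjoint union of copies of $\Spec(C_t)$, one for each subgroup of $\QZ^{n-t}$ of order at most $p^k$; in particular the section $\ast \lra{L} \sub_{\leq k}(\QZ^{n-t})$ identifies $\ast$ with the component indexed by $L$, and both maps in the triangle of Theorem \ref{thm2} are disjoint unions over the components of $\sub_{\leq k}(\QZ^{n-t})$. Because the left-hand map of that triangle sends the component of $[\al]$ into the component indexed by $\im(\al^*)$, pulling the isomorphism of Theorem \ref{thm2} back along $L$ yields an isomorphism
\[
\Coprod{[\al]\,:\,\im(\al^*) = L}\Spec\big(C_{t}^{0}(BC(\im \al))/I_{tr}^{[\al]}\big) \;\cong\; f^{-1}(L).
\]

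It then remains to see that exactly one summand on the left is nonempty, namely the one attached to the given $\al$, and this is a repackaging of the combinatorics in the proof of Lemma \ref{combinatorial}. For a conjugacy class $[\al]$, the condition $\im(\al^*) = L$ is exactly the statement that $\Z_p^{n-t}$ acts, through $\im \al \cong L^*$, faithfully on the associated $\Z_p^{n-t}$-set $X$ of order $p^k$; equivalently $[\al]$ corresponds to a faithful $L^*$-set of order $p^k$. The proof of Lemma \ref{combinatorial} shows that $I_{tr}^{[\al]}$ is a proper ideal (equivalently, the corresponding $\Spec$ is nonempty) precisely when $X$ is a disjoint union of $p^{k-m}$ copies of a single transitive $\Z_p^{n-t}$-set $Y$ of order $p^m$ with $m$ minimal, i.e.\ exactly when $\al$ factors through $\Delta$ as in that lemma; in all other cases the relevant transfer is the identity, so $I_{tr}^{[\al]} = C_{t}^{0}(BC(\im \al))$ and the summand is empty. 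Finally, faithfulness of the $L^*$-action on $X$ forces $H = e$ in the presentation $Y \cong L^*/H$, so $Y$ is the regular $L^*$-set and $m = \log_p|L|$; this pins down $[\al]$ uniquely. Hence the coproduct collapses to the single term $\Spec(C_{t}^{0}(BC(\im \al))/I_{tr}^{[\al]})$ for the $\al$ in the statement, giving the claimed isomorphism. By the connectedness results at the beginning of this subsection, this scheme is moreover connected, so $f^{-1}(L)$ is a single connected component of $\sub_{k}(\G_{C_t} \oplus \QZ^{n-t})$.

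The only real content is in the second paragraph, and even there the substance is already in Lemma \ref{combinatorial}; the main thing to get right is the translation between conjugacy classes of maps $\Z_p^{n-t} \lra{} \Sigma_{p^k}$, faithful $L^*$-sets of order $p^k$, and the subgroup $L = \im(\al^*) \subseteq \QZ^{n-t}$, together with the bookkeeping that the fiber of $f$ over $L$ and the fiber of the left-hand map of Theorem \ref{thm2} over $L$ correspond under that isomorphism. The observation that $\sub_{\leq k}(\QZ^{n-t})$ decomposes into copies of $\Spec(C_t)$ indexed by subgroups --- so that ``the fiber over $L$'' commutes with the displayed disjoint-union decompositions --- should be stated explicitly, but is immediate from $\QZ^{n-t}$ being constant over $\Spec(C_t)$.
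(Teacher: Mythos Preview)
Your proof is correct and is essentially the same argument the paper has in mind: the paper's proof is the single sentence ``This follows immediately from the previous theorem,'' and what you have written is exactly the natural unpacking of that sentence --- pull back the commutative triangle of Theorem \ref{thm2} over the component $L$, then invoke Lemma \ref{combinatorial} to see that among all $[\al]$ with $\im(\al^*)=L$ precisely one (the one factoring through $\Delta$, corresponding to copies of the regular $L^*$-set) yields a nonzero summand. Your translation between the condition $\im(\al^*)=L$ and faithful $L^*$-sets of order $p^k$, and the observation that faithfulness forces the transitive piece $Y$ to be the regular $L^*$-set, are the right bookkeeping steps and match the combinatorics in the proof of Lemma \ref{combinatorial}.
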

\begin{proof}
This follows immediately from the previous theorem.
\end{proof}

\begin{remark}
When $t = n-1$ the groups that arise as centralizers of maps $\al:\Z_p \lra{} \Sigma_{p^k}$ that factor through $\Delta$ are groups of the form
\[
\Zp{i} \wr \Sigma_{p^j},
\] 
where $i+j = k$.
\end{remark}

\begin{remark}
When $\im \al = e \subset \Sigma_{p^k}$, the fiber over $e \in \sub_{\leq k}(\QZ^{n-t})$ is $\sub_k(\G_{C_t})$. 
\end{remark}

\begin{remark}
When $\im \al = \Z/p^k \subset \Sigma_{p^k}$, the image of the Pontryagin dual is a subgroup of $\QZ^{n-t}$ isomorphic to $\Z/p^k$. The fiber is $\G_{C_t}[p^k]$. 
\end{remark}

Before this, the two classes of finite groups with algebro-geometric interpretations of their cohomology rings were cyclic groups and symmetric groups. The remarks above imply that the fibers of the subgroups between $e$ and $\Zp{k}$ can be viewed as interpolating between these two examples.

\bibliographystyle{abbrv}
\bibliography{mybib}

\begin{thebibliography}{10}

\bibitem{infiniteloop}
J.~F. Adams.
\newblock {\em Infinite loop spaces}, volume~90 of {\em Annals of Mathematics
  Studies}.
\newblock Princeton University Press, Princeton, N.J., 1978.

\bibitem{Isogenies}
M.~Ando.
\newblock Isogenies of formal group laws and power operations in the cohomology
  theories {$E_n$}.
\newblock {\em Duke Math. J.}, 79(2):423--485, 1995.

\bibitem{BehrensRezk}
M.~Behrens and C.~Rezk.
\newblock The bousfield-kuhn functor and topological andre-quillen cohomology.
\newblock http://math.mit.edu/~mbehrens/papers/BKTAQ4.pdf.

\bibitem{hkr}
M.~J. Hopkins, N.~J. Kuhn, and D.~C. Ravenel.
\newblock {Generalized group characters and complex oriented cohomology
  theories.}
\newblock {\em J. Am. Math. Soc.}, 13(3):553--594, 2000.

\bibitem{marshthesis}
S.~Marsh.
\newblock The morava e-theories of finite general linear groups.
\newblock arxiv:1001.1949.

\bibitem{loopspaces}
J.~P. May.
\newblock {\em The geometry of iterated loop spaces}.
\newblock Springer-Verlag, Berlin, 1972.
\newblock Lectures Notes in Mathematics, Vol. 271.

\bibitem{tgcm}
N.~J. Stapleton.
\newblock Transchromatic generalized character maps.
\newblock {\em Algebr. Geom. Topol.}
\newblock to appear.

\bibitem{ttcm}
N.~J. Stapleton.
\newblock Transchromatic twisted character maps.
\newblock submitted.

\bibitem{subgroups}
N.~P. Strickland.
\newblock Finite subgroups of formal groups.
\newblock {\em J. Pure Appl. Algebra}, 121(2):161--208, 1997.

\bibitem{etheorysym}
N.~P. Strickland.
\newblock Morava {$E$}-theory of symmetric groups.
\newblock {\em Topology}, 37(4):757--779, 1998.

\end{thebibliography}

\end{document}